%
%
%
%
\documentclass{amsart}
\newcommand{\RN}[1]{\textup{\uppercase\expandafter{\romannumeral#1}}}
\usepackage{amsmath}
\usepackage{amssymb}
\usepackage{enumerate}
\usepackage[hidelinks]{hyperref}

\usepackage{mathtools}
\usepackage[capitalise]{cleveref}
\numberwithin{equation}{section}

\newcommand\mc{\mathcal}

\newcommand\mb{\mathbb}
\crefname{equation}{}{}
\newtheorem{theorem}{Theorem}[section]
\newtheorem{lemma}[theorem]{Lemma}

\newtheorem{corollary}[theorem]{Corollary}

\theoremstyle{definition}

\newtheorem{definition}[theorem]{Definition}
\newtheorem{example}[theorem]{Example}

\theoremstyle{remark}
\newtheorem{remark}[theorem]{Remark}
\usepackage{epigraph}
\usepackage{comment}
\usepackage{tikz}
\usepackage{tikz-cd}

\usepackage{enumitem}
\title{Resolution of the diagonal on the Root Stacks}
\author{Yu Zhao}
\begin{document}

\maketitle
\begin{abstract}
  In this paper, we give a new constructive proof of the semi-orthogonal decomposition of the derived category of (quasi)-coherent sheaves of root stacks, through an explicit resolution of the diagonal.
\end{abstract}
\setlength{\epigraphwidth}{0.45\textwidth}
\epigraph{\itshape To laugh is to live profoundly. }{\textit{Milan Kundera}}
\setlength{\epigraphwidth}{0.45\textwidth}
\section{Introduction}

\subsection{The main result of this paper}
The main result of this paper is a constructive proof on the semi-orthogonal decomposition of derived category of (quasi)-coherent sheaves on root stacks, which has been intensively studied by Ishii-Ueda \cite{MR3436544}, Bergh-Lunts-Schnürer \cite{MR3573964}, Kuznetsov-Perry \cite{MR3595897}, Bergh-Schnürer \cite{MR4031114}  and recently by Bodzenta-Donovan \cite{bodzenta2023root}:
\begin{theorem}
  \label{thm:main}
  Let $\mc{D}$ be an effective Cartier divisor of an algebraic stack $\mc{X}$. Given an integer $l>1$, let $\mc{X}_{\mc{D},l}$ as the $l$-th root stack of $\mc{X}$ along $\mc{D}$ and $\mc{D}_{l}:=\mc{D}\times_{B\mb{G}_{m}}B\mb{G}_{m}$, where the map from $\mc{D}\to B\mb{G}_{m}$ is decided by the line bundle $\mc{L}_{\mc{D},l}:=\mc{O}_{\mc{X}}(-\mc{D})|_{\mc{D}}$ and the map $B\mb{G}_{m}\to B\mb{G}_{m}$ is the $l$-th power map. Then under the following natural commutative diagram
\begin{equation*}
  \begin{tikzcd}
    \mc{D}_{l}\ar{r}{i_{\mc{X}_{\mc{D},l}}} \ar{d}{Bt_{\mc{D}}^{l}} & \mc{X}_{\mc{D},l}\ar{d}{\theta_{\mc{X}}^{l}}\\
    \mc{D}\ar{r}{i_{\mc{X}}} & \mc{X}.
  \end{tikzcd}
\end{equation*}
the following functors
\begin{gather*}
  \triangleright_{i,\mc{X}}:=(-\otimes\mc{L}_{D,l}^{i})\circ Ri_{\mc{X}_{\mc{D},l}*}\circ LBt_{\mc{D}}^{l*}:D^{+}_{qcoh}(\mc{D})\to D_{qcoh}^{+}(\mc{X}_{\mc{D},l}),\\
  L\theta_{\mc{X}}^{l*}:D_{qcoh}^{+}(\mc{X})\to D_{qcoh}^{+}(\mc{X}_{\mc{D},l})
\end{gather*}
are fully faithful. Moreover, let $D_{l,\mc{D}}^{i}$ be the image of $\triangleright_{i,\mc{X}}$, then for any $0\leq i\leq l-1$, we have the semi-orthogonal decomposition
\begin{equation}
  D^{+}_{qcoh}(\mc{X}_{\mc{D},l}):=<D_{l,\mc{D}}^{i-l+1},\cdots, D_{l,\mc{D}}^{-1}, L\theta^{*}_{l}D^{+}_{qcoh}(\mc{X}), D_{l,\mc{D}}^{0},\cdots, D_{l,\mc{D}}^{i-1}>.
\end{equation}
Similar arguments also hold for complexes with bounded or coherent cohomologies.                                                               
\end{theorem}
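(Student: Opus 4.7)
The plan, in line with the paper's title, is to construct an explicit resolution of the diagonal
\[
\Delta\colon \mc{X}_{\mc{D},l}\longrightarrow \mc{X}_{\mc{D},l}\times_{\mc{X}}\mc{X}_{\mc{D},l}
\]
whose terms have a Künneth-type shape $p_{1}^{*}A_{k}\otimes p_{2}^{*}B_{k}$, with each $A_{k},B_{k}$ lying in one of the proposed components of the decomposition: either the essential image of $L\theta_{\mc{X}}^{l*}$ or a twist of $Ri_{\mc{X}_{\mc{D},l}*}LBt_{\mc{D}}^{l*}(-)$ by some power of $\mc{L}_{\mc{D},l}$. The main ingredient is the tautological $l$-th root line bundle $\mc{T}$ on $\mc{X}_{\mc{D},l}$, characterized by $\mc{T}^{\otimes l}\simeq \theta_{\mc{X}}^{l*}\mc{O}_{\mc{X}}(-\mc{D})$ together with the tautological section cutting out the universal divisor $\mc{D}_{l}$. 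On the fiber product, the two pullbacks $p_{1}^{*}\mc{T}$ and $p_{2}^{*}\mc{T}$, compared via their $l$-th powers coming from $\mc{X}$, produce a natural map whose cone on the diagonal is controlled by the Cartier divisors $p_{i}^{*}\mc{D}_{l}$.

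Concretely, I would first assemble a two-term complex of the shape $p_{1}^{*}\mc{T}\otimes p_{2}^{*}\mc{T}^{-1}\to \mc{O}$ whose cokernel is supported on the locus where the two roots agree, and then iterate the construction via a Koszul-style filtration in powers of $\mc{T}$. The expected outcome is a length-$l$ resolution of $\Delta_{*}\mc{O}_{\mc{X}_{\mc{D},l}}$ whose $k$-th term is, up to a twist, the external tensor product of an object pulled back from $\mc{X}$ with either another such object or an object pushed forward from $\mc{D}_{l}$ and twisted by $\mc{L}_{\mc{D},l}^{k}$ for $k$ in the range dictated by the statement. The commutative square in the theorem, together with flat base change along $\theta_{\mc{X}}^{l}$, then identifies each term with a functor factoring through one of the proposed subcategories.

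Once the resolution is in place, full faithfulness and the semi-orthogonal decomposition follow from standard yoga: convolving an arbitrary $E\in D^{+}_{qcoh}(\mc{X}_{\mc{D},l})$ against the resolution via $Rp_{1*}(-\otimes^{L}p_{2}^{*}E)$ yields an iterated extension that expresses $E$ in the required form, which gives generation. Full faithfulness of $L\theta_{\mc{X}}^{l*}$ reduces, by the projection formula, to $R\theta_{\mc{X}*}^{l}\mc{O}_{\mc{X}_{\mc{D},l}}\simeq \mc{O}_{\mc{X}}$, a computation local on $\mc{X}$ that can be done on $[\mb{A}^{1}/\mb{G}_{m}]$. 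Full faithfulness of $\triangleright_{i,\mc{X}}$ and the required semi-orthogonality relations $\mathrm{Hom}(D_{l,\mc{D}}^{j},D_{l,\mc{D}}^{k})=0$ for $j>k$ and between $L\theta_{\mc{X}}^{l*}D^{+}_{qcoh}(\mc{X})$ and the $D_{l,\mc{D}}^{k}$ in the prescribed range reduce, again via adjunction and the projection formula, to cohomology computations for the tautological twists $\mc{L}_{\mc{D},l}^{k}$ on $\mc{D}_{l}$, which vanish in the relevant range $-l<k<0$ or $0<k<l$.

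The main obstacle I expect is the explicit construction and exactness verification of the resolution itself: writing down maps between the external tensor terms on the fiber product, keeping track of the shifts of $\mc{L}_{\mc{D},l}$, and proving that the alternating sum of these Künneth-type terms really is quasi-isomorphic to $\Delta_{*}\mc{O}$. A secondary subtlety is that the argument must be carried out in enough generality (algebraic stacks, not just schemes) so that flat base change and the projection formula apply in the derived setting used here; this is likely handled by reducing to the local model $[\mb{A}^{1}/\mb{G}_{m}]\to [\mb{A}^{1}/\mb{G}_{m}]$ and smooth descent.
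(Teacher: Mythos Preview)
Your overall architecture matches the paper's exactly: reduce to the universal model $\theta^{l}\colon\Theta\to\Theta$ with $\Theta=[\mb{A}^{1}/\mb{G}_{m}]$, build explicit Fourier--Mukai kernels on the fiber product, and pull back along the classifying map $r\colon\mc{X}\to\Theta$ determined by $(\mc{O}_{\mc{X}}(-\mc{D})\hookrightarrow\mc{O}_{\mc{X}})$. The full-faithfulness and semi-orthogonality verifications you sketch (projection formula, $R\theta^{l}_{\mc{X}*}\mc{O}\simeq\mc{O}$, vanishing of $RBt^{l}_{\mc{D}*}\mc{L}_{\mc{D},l}^{k}$ in the forbidden range) are precisely what the paper records as Lemma~\ref{main1}.

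The concrete step that does not work as written is your proposed two-term complex $p_{1}^{*}\mc{T}\otimes p_{2}^{*}\mc{T}^{-1}\to\mc{O}$. On the local model the fiber product is $[\alpha_{l}/\mb{G}_{m}\times\mu_{l}]$ with $\alpha_{l}=\mathrm{Spec}(\mb{Z}[x,y]/(x^{l}-y^{l}))$, and $p_{1}^{*}\mc{T}\otimes p_{2}^{*}\mc{T}^{-1}$ has $\mu_{l}$-weight $-1$; there is no equivariant section, so no such map exists. What one has instead are the two sections $x\colon p_{1}^{*}\mc{T}\to\mc{O}$ and $y\colon p_{2}^{*}\mc{T}\to\mc{O}$, and the correct filtration is by the ideal powers $(x,y)^{n}\subset\mb{Z}[x,y]/(x^{l}-y^{l})$. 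The key algebraic fact the paper isolates (\S\ref{ext3}) is that $(x,y)^{l-1}$ is already isomorphic to the pushforward of $x^{l-1}\mb{Z}[x,t]/(t^{l}-1)$ along the diagonal, so the quotients $\tau_{n,m}$ interpolating between $\Delta_{*}(x^{n}\cdot-)$ and $(x,y)^{m}$ give a family of short exact sequences whose associated graded pieces are rank-one skyscrapers $\mb{Z}\langle a,b\rangle$ at the origin. Under Fourier--Mukai these become the functors $\triangleright_{\bullet}\triangleleft_{\bullet}$, and the two boundary cases $\tau_{0,0}=\Delta_{*}\mc{O}$, $\tau_{0,l-1}=\mc{O}_{\alpha_{l}}$ give $\mathrm{id}$ and $L\theta^{l*}R\theta^{l}_{*}$. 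So it is not a single length-$l$ Koszul resolution but a double filtration $\{\tau_{n,m}\}_{0\le n\le m\le l-1}$, which is also what produces all $l$ of the decompositions indexed by $i$ rather than just one.
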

\begin{remark}
  Our setting is similar to Bergh-Lunts-Schnürer \cite{MR3573964}, which only assumes that $\mc{X}$ is an algebraic stack over $\mb{Z}$. Our strategy could also be generalized to the derived category of perfect complexes with very mild modifications. We will leave it as an exercise for readers with interest.
\end{remark}

\begin{remark}
  Bodzenta-Donovan \cite{bodzenta2023root} pointed out that the above semi-orthogonal decomposition is $2l$-periodic, and thus induces higher spherical functors. We will try to sketch a potential relation of those higher spherical functors with the categorical representation theory in \cref{sec1.4}.
\end{remark}

\begin{remark}
  The root stack construction by Cadman \cite{MR2306040} or Abramovich-Graber-Vistoli \cite{MR2450211} only requires a global section of a line bundle $\mc{L}$ over $\mc{X}$. In this paper, we only consider the case that the global section is injective i.e. induced by an effective Cartier divisor. We make this restriction so we can work in a more classical framework rather than the derived algebraic geometry, and this issue can be solved by introducing the definition ``virtual effective Cartier divisor'' in Khan-Rydh \cite{khan2018virtual}.
\end{remark}
\subsection{Resolutions of the diagonal and the semi-orthogonal decomposition}
Given an integer $n>0$, Beilinson \cite{beilinson1978coherent} gave a resolution of the diagonal of $\mb{P}^{n}$ by the long-exact sequence:
\begin{equation*}
0\to  \wedge^{n}\Omega_{\mb{P}^{n}}(n)\boxtimes\mc{O}_{\mb{P}^{n}}(-n)\to \cdots \to \Omega_{\mb{P}^{n}}\boxtimes \mc{O}_{\mb{P}^{n}}(-1)\to \mc{O}_{\mb{P}^{n}\times\mb{P}^{n}}\to \Delta_{*}\mc{O}_{\mb{P}^{n}}\to 0,
\end{equation*}
which induces the semi-orthogonal decomposition
\begin{equation*}
  D^{b}_{coh}(\mb{P}^{n})=<\mc{O}_{\mb{P}_{n}},\mc{O}_{\mb{P}_{n}}(1),\cdots, \mc{O}_{\mb{P}_{m}}(n)>=<\mc{O}_{\mb{P}_{n}},\Omega_{\mb{P}_{n}}(1),\cdots, \wedge^{n}\Omega_{\mb{P}_{n}}(n)>.
\end{equation*}

We refer to Kuznetsov \cite{MR3728631}\cite{kuznetsov2021semiorthogonal} for a general introduction to the semi-orthogonal decomposition of the derived category of coherent sheaves on algebraic varieties. One of the main purposes of this paper is to give an explicit resolution of the diagonal which induced the semi-orthogonal decomposition, following Appendix A of our work \cite{zhao2023generalized} for Orlov's semi-orthogonal decomposition theorem for blow-up of smooth varieties \cite{MR1208153}. We prove that
\begin{theorem}[\cref{main2}]
  \label{thm:main2}
  Let
  \begin{equation*}
    \triangleleft_{i,\mc{X}}:D_{qcoh}^{+}(\mc{X}_{\mc{D},l})\to D_{qcoh}^{+}(\mc{D})
  \end{equation*}
  be the cohomological degree $1$ shift of the right adjoint functor of $\triangleright_{i,\mc{X}}$. Given $0\leq n\leq m\leq n-1$, there exists 
\begin{equation*}
  \tau_{n,m,\mc{X}}:= D^{b}_{coh}(\mc{X}_{\mc{D},l}\times_{\theta_{\mc{X}}^{l},\mc{X},\theta_{\mc{X}}^{l}}\mc{X}_{\mc{D},l}).
\end{equation*}
such that the Fourier-Mukai functors generated by kernels $\tau_{n,m,\mc{X}}$ satisfy:
\begin{equation*}
\overline{\tau}_{n,m,\mc{X}}:D^{+}_{qcoh}(\mc{X}_{\mc{D},l})\to D^{+}_{qcoh}(\mc{X}_{\mc{D},l})
\end{equation*}
\begin{enumerate}
  \item we have
    \begin{equation*}
    \overline{\tau_{n,n,\mc{X}}}\cong \otimes \mc{O}_{\mc{X}_{\mc{D},l}}(-\mc{D}_{l})^{n},\quad \overline{\tau_{0,l-1,\mc{X}}}\cong L\theta_{\mc{X}}^{l*}R\theta_{\mc{X}*}^{l}
    \end{equation*}
  \item for any $0\leq n<m \leq l-1$, we have canonical triangles:
    \begin{align*}
      \overline{\tau_{n,m-1,\mc{X}}}\to \overline{\tau_{n,m,\mc{X}}}\to \bigoplus_{i=0}^{m} \triangleright_{m-i,\mc{X}} \triangleleft_{i,\mc{X}}\\
      \overline{\tau_{n,m,\mc{X}}}\to \overline{\tau_{n+1,m,\mc{X}}}\to \bigoplus_{i=n+1}^{l-1} \triangleright_{i-1-n,\mc{X}}\triangleleft_{i,\mc{X}}.
    \end{align*}.
      \item All those functors map complexes with bounded cohomologies (resp. coherent cohomologies) to complexes with bounded cohomologies (resp. coherent cohomologies).
  \end{enumerate}
\end{theorem}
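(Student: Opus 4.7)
My plan is to construct the Fourier--Mukai kernels $\tau_{n,m,\mc{X}}$ on $\mc{X}_{\mc{D},l}\times_{\theta_{\mc{X}}^{l},\mc{X},\theta_{\mc{X}}^{l}}\mc{X}_{\mc{D},l}$ by a double induction on the pair $(n,m)$ in the grid $0\leq n\leq m\leq l-1$, following the strategy used for blow-ups in~\cite{zhao2023generalized}. The base cases are the diagonal kernels $\tau_{n,n,\mc{X}}:=\Delta_{*}\mc{O}_{\mc{X}_{\mc{D},l}}(-n\mc{D}_{l})$, where $\Delta$ denotes the diagonal of $\mc{X}_{\mc{D},l}$ over $\mc{X}$; these immediately induce the twist functors asserted in~(1). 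The remaining kernels will then be built one step at a time by taking cones of canonical maps of kernels, so that the two families of triangles in~(2) are the direct expression of these cone constructions.

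For the inductive step I would produce the canonical maps $\tau_{n,m-1,\mc{X}}\to\tau_{n,m,\mc{X}}$ (and similarly $\tau_{n,m,\mc{X}}\to\tau_{n+1,m,\mc{X}}$) at the level of kernels, by pulling back the units/counits of the adjunctions $(\triangleright_{i,\mc{X}},\triangleleft_{i,\mc{X}}[-1])$ via the two projections from the fiber product, and then passing to cones. The cones should be identified with the prescribed direct sums $\bigoplus_{i}\triangleright_{\bullet,\mc{X}}\triangleleft_{i,\mc{X}}$. The key computation is the splitting of the pushforward of the structure sheaf of $\mc{D}_{l}\times_{\mc{D}}\mc{D}_{l}$ into $\mu_{l}$-character components coming from the gerbe structure $\mc{D}_{l}=\mc{D}\times_{B\mb{G}_{m}}B\mb{G}_{m}$; the direct sum appearing in the triangles corresponds precisely to these character components. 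For the endpoint identification $\overline{\tau_{0,l-1,\mc{X}}}\cong L\theta_{\mc{X}}^{l*}R\theta_{\mc{X}*}^{l}$, I would show that after $l-1$ iterations of the horizontal induction step the resulting kernel becomes (quasi-isomorphic to) the full structure sheaf $\mc{O}_{\mc{X}_{\mc{D},l}\times_{\mc{X}}\mc{X}_{\mc{D},l}}$, which by flat base change along $\theta_{\mc{X}}^{l}$ represents the composite $L\theta_{\mc{X}}^{l*}R\theta_{\mc{X}*}^{l}$.

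Statement~(3) on boundedness and coherence preservation will follow by the same induction on $m-n$ together with the explicit form of the cones: each inductive step only adds coherent terms supported on (thickenings of) $\mc{D}_{l}$, and no unbounded contributions arise. The main obstacle will be carrying out the cone identification in a functorial and canonical way, matching the direct summands on the nose rather than only up to non-canonical isomorphism; to do so I would reduce to a local calculation on a smooth atlas of $\mc{X}$ where the root stack becomes $[\mb{A}^{1}/\mu_{l}]$ and the diagonal structure is entirely explicit, which should make the gerbe decomposition and the resulting cone computations concrete enough to pin down canonically.
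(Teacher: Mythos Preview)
Your approach differs substantially from the paper's, and has a genuine gap. The paper does not build the kernels $\tau_{n,m,\mc{X}}$ by an inductive cone construction. Instead it works first on the universal situation $\Theta\times_{\theta^{l},\Theta,\theta^{l}}\Theta\cong[\alpha_{l}/\mb{G}_{m}\times\mu_{l}]$ over $\mb{Z}$, where $\alpha_{l}=\mathrm{Spec}(\mb{Z}[x,y]/(x^{l}-y^{l}))$, and writes down all the $\tau_{m,n}$ \emph{at once} as explicit coherent sheaves (graded $\mb{Z}[x,y]/(x^{l}-y^{l})$-modules):
\[
\tau_{m,n}=\frac{(x^{n})\,\mb{Z}[x,t]/(t^{l}-1)\ \oplus\ (x,y)^{m}\,\mb{Z}[x,y]/(x^{l}-y^{l})}{(x,y)^{n}\,\mb{Z}[x,y]/(x^{l}-y^{l})}.
\]
The two families of ``triangles'' are then genuine short exact sequences of sheaves, the endpoint identifications $\tau_{n,n}\cong\Delta_{*}(x^{n})$ and $\tau_{0,l-1}\cong\mc{O}_{\alpha_{l}}$ are read off directly from the formula, and the general $\tau_{n,m,\mc{X}}$ is obtained simply by pulling back along $r:\mc{X}\to\Theta$. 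No cones are ever taken, so the canonicity problem you flag never arises.

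The gap in your plan is the inductive step. The adjunction $(\triangleright_{i},\triangleleft_{i+1}[-1])$ only supplies a counit $\triangleright_{i}\triangleleft_{i+1}[-1]\to\mathrm{id}$, i.e.\ a map of kernels into $\tau_{n,n}$. It does not produce the map $\bigl(\bigoplus_{i=0}^{m}\triangleright_{m-i}\triangleleft_{i}\bigr)[-1]\to\tau_{n,m-1}$ you would need to define $\tau_{n,m}$ as a cone once $m-1>n$, nor the map $\tau_{n,m}\to\bigoplus_{i=n+1}^{l-1}\triangleright_{i-1-n}\triangleleft_{i}$ in the other family; and the summands $\triangleright_{m-i}\triangleleft_{i}$ with $m-i\neq i-1$ are not counits of any of these adjunctions. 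Even if such maps were produced, iterated cones in a triangulated category are only determined up to non-unique isomorphism, so there is no a priori reason your horizontal and vertical triangles refer to the same object $\tau_{n,m}$. Your proposed cure---pass to the local model $[\mb{A}^{1}/\mu_{l}]$---is exactly the paper's starting point; the difference is that the paper uses that model as the \emph{definition} of $\tau_{m,n}$ (via the explicit module above) rather than as a post hoc check, and that is what makes the argument go through.
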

\subsection{Motivation and derived birational geometry}
\label{sec13}
In \cite{zhao2023generalized}, we found a surprising relation between the resolution of the diagonal with the birational geometry of derived schemes. Now we try to briefly introduce this insight without introducing the machinery of $\infty$-categories.

For the simplicity, we assume that $\mc{X}=[\mb{A}^{1}_{\mb{C}}/\mb{C}^{*}]$ and the Cartier divisor is $B\mb{C}^{*}$. In this situation $\mc{X}_{\mc{D},l}$ is also $[\mb{A}^{1}_{\mb{C}^{1}}/\mb{C}^{*}]$ and
\begin{equation*}
  \mc{X}_{\mc{D},l}\times_{\mc{X}}\mc{X}_{\mc{D},l}\cong [\{(x,y)\in\mb{A}^{2}_{\mb{C}}|\prod_{j\in \mb{Z}/l\mb{Z}}(x-e^{\frac{2\pi i}{l}\cdot j }y)=0\}/(\mb{C}^{*}\times \mb{Z}/l\mb{Z})]
\end{equation*}
which is $l$-lines that intersect at the original point, quotient the action of $\mb{C}^{*}$ by the scalar action on both $x$ and $y$ and $\mb{Z}/l\mb{Z}$ by multiplying the roots of unity on $y$.

Now we blow up the origin point $(0,0)$ in 
\begin{equation*}
 \alpha_{l}:=\{(x,y)\in\mb{A}^{2}_{\mb{C}}| \prod_{j\in \mb{Z}/l\mb{Z}}(x-e^{\frac{2\pi i}{l}\cdot j }y)=0\}.
\end{equation*}
In the classical setting, we will get $l$ lines which do not intersect with each other, which is exactly $\mc{X}_{\mc{D},l}$ after quotient $\mb{C}^{*}\times \mb{Z}/l\mb{Z}$, and the natural projection is the diagonal morphism. However, in the sense of derived setting of Hekking \cite{Hekking_2022}, we should consider a family of varieties
\begin{equation*}
  \mc{M}_{k}:=\{((x_{1},y_{1}),[x_{2},y_{2}])\in \mb{A}^{2}_{\mb{C}}\times \mb{P}^{1}_{\mb{C}}|x_{1}y_{2}=y_{1}x_{2},x_{1}^{l-k}x_{2}^{l}=y_{1}^{l-k}y_{2}^{l}\}, \quad 0\leq k\leq l.
\end{equation*}
Then $\mc{M}_{l}\cong \mb{A}^{1}_{\mb{C}}\times \mb{Z}/l\mb{Z}$ and for other $k<l$, $\mc{M}_{k}$ contains $\mb{P}^{1}_{\mb{C}}$ which contains points such that $x_{1}=y_{1}=0$. In the sense of derived blow-up of Hekking \cite{Hekking_2022}, we will have
\begin{equation*}
  \mb{R}Bl_{\mc{M}_{k}}\mb{P}^{1}\cong \mc{M}_{k+1}, \quad \mb{R}Bl_{\alpha_{k}}\{(0,0)\}\cong \mc{M}_{1}
\end{equation*}
Moreover, in \cite{zhao2023generalized} we systematically studied the variation of certain quasi-coherent sheaves after the derived blow-up. Thus \cref{thm:main} actually is a direct corollary of the generalized vanishing theorem in \cite{zhao2023generalized}.

For algebraic stacks over $\mb{Z}$, we should be careful as the group of $l$-th of unity $\mu_{l}$ and the constant group $\mb{Z}/l\mb{Z}$ are not isomorphic in this situation. A detailed computation in this case is given in \cref{ext3}.

\subsection{Relations with the categorical representation theory}
\label{sec1.4}
In this subsection, we describe a potential relation between the semi-orthogonal decomposition of root stacks with the categorical representation, which we will study the detail in future work.

We denote $\Theta:=[\mb{A}^{1}/\mb{G}_{m}]$ and the morphism of $l$-th power on $\mb{A}^{1}$ induces a morphism $\theta_{l}:\Theta\to \Theta$. Then the composition of relative Fourier-Mukai transforms induce a monoidal structure on
\begin{equation*}
  D^{b}_{coh}(\Theta\times_{\theta_{l},\Theta,\theta_{l}}\Theta).
\end{equation*}
Then for any $l$-the root stack $\mc{X}_{\mc{D},l}$, the pull-back morphisms induce that $D^{b}_{Coh}(\mc{X}_{\mc{D},l})$ is a representation of the monoidal category $D^{b}_{coh}(\Theta\times_{\theta_{l},\Theta,\theta_{l}}\Theta)$. Moreover, we notice that $Coh(\Theta\times_{\theta_{l},\Theta,\theta_{l}}\Theta)$ consists of $\mb{Z}[x]\otimes_{\mb{Z}[x^{l}]}\mb{Z}[x]$ bimodules with certain grading. We expect that the semi-orthogonal decomposition of the root stacks follows directly from the categorical representation theory of those bimodules.
\subsection{Organization of this paper}
As we will work on stacks over $\mathrm{Spec}(\mb{Z})$, we review the background of affine groups over commutative rings and the derived category of (quasi)-coherent sheaves on algebraic stacks in \cref{sec2} and \cref{sec3} respectively. Experts or readers who only care about schemes over $\mb{C}$ should feel free to just  read \cref{ext3}, \cref{universal} and \cref{der}. The proof of \cref{thm:main} and \cref{thm:main2} is given in \cref{sec4}.
\subsection{Acknowledgments}
The paper is directly inspired by Bodzenta-Donovan \cite{bodzenta2023root}, where the author learned and discussed with them in the conference ``Current Trends in Categorically Approach to Algebraic and Symplectic Geometry \RN{2}'' in June 2023 at Kavli IPMU. The author would like to appreciate Agnieszka Bodzenta and Will Donovan for their interest and many helpful discussions in this topic.

Part of this paper was given in my lecture series in the Shanghai Tech University and Chinese Academy of Sciences. The author would like to thank the above institutes and Zhiyuan Ding, Mingliang Cai, Ziyu Zhang, Siqi He and Baohua Fu for their invitations and support.

The author is supported by World Premier International Research Center Initiative (WPI initiative), MEXT, Japan, and Grant-in-Aid for Scientific Research grant  (No. 22K13889) from JSPS Kakenhi, Japan.

\section{Affine group schemes and actions}
\label{sec2}

In this section, we always assume that $R$ is a commutative ring. We identify the category of affine schemes over $\mathrm{Spec}(R)$ with the opposite category of commutative $R$-algebras. Given an affine $R$-scheme $X:=R[X]$, we identify the abelian category of quasi-coherent (resp. coherent if $R[X]$ is Noetherian) sheaves $\mathrm{QCoh}(X)$ (resp. $
\mathrm{Coh}(X)$) with the abelian category of (resp. finite generated) $R[X]$-modules $R[X]-\mathrm{Mod}$ (resp. $R[X]-\mathrm{Mod}^{fg}$).

\subsection{Affine group schemes} 

\begin{definition}
  We define an $R$-affine group $G$ as a flat, finite type and commutative $R$-algebra $\pi_{G}:R\to R[G]$ with $R$-algebra morphisms
  \begin{equation*}
    m_{G}:R[G]\to R[G]\otimes_{R}R[G], \quad i_{G}:R[G]\to R[G], \quad e_{G}:R[G]\to R
\end{equation*}
  which we call multiplication, inversion, and identification respectively, such that they satisfy the group laws:
     \begin{align*}
     & (m_{G}\otimes_{R}id_{R[G]})\circ m_{G}= (id_{R[G]}\otimes_{R}m_{G})\circ m_{G}, & R[G]\to R[G]\otimes_{R}R[G]\otimes_{R}R[G], \\
     & id_{R[G]}= (id_{R[G]}\otimes_{R}e_{G})\circ m_{G}= (e_{G}\otimes_{R} id_{R[G]})\circ m_{G}, & R[G]\to R[G], \\
     & \pi_{G}\circ e_{G}= (i_{G}\otimes_{R}id_{R[G]})\circ m_{G}=(id_{R[G]}\otimes_{R}i_{G})\circ m_{G}, &R[G]\to R[G].
  \end{align*}

  We define an $R$-affine group morphism from an $R$-affine group $G$ to another $R$-affine group $H$ as a morphism of $R$-algebras $f:R[H]\to R[G]$ such that
  \begin{align*}
    (f\otimes_{R}f) \circ m_{H}=m_{G}\circ f: R[H]\to R[G]\otimes_{R}R[G]
  \end{align*}
\end{definition}

The $R$-affine group has the following properties by definition:
\begin{itemize}
\item   Given a morphism of commutative ring $S\to T$ and an $R$-affine group $G$, $G_{T}:=\mathrm{Spec}(R[G]\otimes_{S}T)$ is also an $T$-affine group by the induced morphisms. Similarly, affine group morphisms are also stable under base change.
\item   Given two morphisms of $R$-affine groups
  \begin{equation*}
    f_{1}:G_{1}\to G, \quad f_{2}:G_{2}\to G,
  \end{equation*}
  $G_{1}\times_{f_{1},G,f_{2}}G_{2}$ is also an affine $R$-group with the multiplication induced by $m_{G_{1}}$ and $m_{G_{2}}$ (if it is fppf over $\mathrm{Spec}(R)$). Particularly, given a morphism of $R$-affine group $f:G\to H$, we denote $ker(f):=G\times_{f,H,\pi_{H}}\mathrm{Spec}(R)$.
\end{itemize}

\begin{example}
The scheme $\mb{G}_{m}:=\mathrm{Spec}(\mb{Z}[t,t^{-1}])$ has a $\mb{Z}$-affine group structure
   \begin{align*}
&m_{\mb{G}_{m}}:\mb{Z}[t,t^{-1}]\to \mb{Z}[t_{1},t^{-1}_{1},t_{2},t_{2}^{-1}] , &\quad t\to t_{1}t_{2}, \\
 &i_{\mb{G}_{m}}:\mb{Z}[t,t^{-1}]\to \mb{Z}[t,t^{-1}],&\quad t\to t^{-1}, \\
 &e_{\mb{G}_{m}}:\mb{Z}[t,t^{-1}]\to \mb{Z}, &\quad t\to 1.
   \end{align*}

   Given any integer $l$, we have a $\mb{Z}$-group $t^{l}:\mb{G}_{m}\to \mb{G}_{m}$ by
   \begin{equation*}
     t^{l}:\mb{Z}[t,t^{-1}]\to \mb{Z}[t,t^{-1}], \quad t\to t^{l}.
   \end{equation*}
   When $l>0$, we denote
   \begin{equation*}
     \mu_{l}:=ker(t^{l})=\mathrm{Spec}(\mb{Z}[t]/(t^{l-1})).
   \end{equation*}
 We denote $\mb{G}_{m,R}:=\mb{G}_{m}\times_{Z}R$ and make similar notations for $\mu_{l,R}$ and $t^{l}_{R}$.
\end{example}

\subsection{Representations of an affine group}
\begin{definition}
  Let $G$ be an $R$-affine group. We define a $G$-representation $M$ as an $R$-module with an $R$-module morphism
  \begin{equation*}
    \sigma_{G,M}:M\to R[G]\otimes_{R}M
  \end{equation*}
  such that
   \begin{align*}
    &(m_{G}\otimes_{R}id_{M})\circ \sigma_{G,M}= (id_{R[G]}\otimes_{R}\sigma_{G,M})\circ \sigma_{G,M},& M\to R[G]\otimes_{R}R[G]\otimes_{R}M &\\
    & id_{M}=(e_{G}\otimes_{R}id_{M})\circ \sigma_{G,M}, & M\to M. &
   \end{align*}
   We say that $M$ is a finitely generated $G$-representation if $M$ is finitely generated as an $R$-module.  Given two $G$-representations $M$ and $N$, we define a $G$-equivariant morphism from $M$ to $N$ as an $R$-module morphism $f:M\to N$ such that
     \begin{equation*}
       \sigma_{G,N}\circ f=(id_{R[G]}\otimes_{R}f)\circ \sigma_{G,M}, \quad M\to R[G]\otimes_{R}N.
     \end{equation*}
   The category of $G$-representations with $G$-equivariant morphisms forms an abelian category, which we denote as $\mathrm{Rep}(G)$. Moreover, if $R$ is Noetherian, the category of finitely generated $G$-representations form an abelian subcategory of $\mathrm{Rep}(G)$, which we denote as $\mathrm{Rep}(G)^{fg}$.
\end{definition}
The category of $G$-representations has the following properties by definition:
\begin{enumerate}
  \item The tensor product of $R$-modules induces a canonical monoidal structure for $\mathrm{Rep}(G)$.
\item Given a $R$-affine group morphism $\psi:G\to H$, let $M$ be an $H$-representation. Then
  \begin{equation*}
    \sigma_{G,M}:=\psi\otimes_{R}\sigma_{H,M}:M\to R[G]\otimes_{R}M 
  \end{equation*}
  induces a $G$-representation structure on $M$. Hence it induces an exact and monoidal pull-back functor:
  \begin{equation*}
    \psi^{*}:\mathrm{Rep}(H)\to \mathrm{Rep}(G).
  \end{equation*}
\end{enumerate}

\begin{example}
  \label{ex1}
   There exists a canonical monoidal equivalence between $\mathrm{Rep}(\mb{G}_{m,R})$ with the category of $\mb{Z}$-graded (resp. $\mb{Z}/l\mb{Z}$) graded $R$-modules $R-\mathrm{Mod}_{\mb{Z}}$ in the following way: given a $\mb{Z}$-graded $R$-module $M=\oplus_{d\in \mb{Z}}M_{d}$, we denote $M|_{d}:M\to M_{d}$ as the projection morphism. Then
  \begin{equation*}
    \sigma_{\mb{G}_{m,R},M}:=\bigoplus_{d\in \mb{Z}}t^{d}\otimes_{R}M|_{d}:M\to M[t,t^{-1}]
  \end{equation*}
  is a $\mb{G}_{m,R}$-representation structure on $M$. Conversely, given $\mb{G}_{m,R}$ representation $M$, the $R$-module morphism
  \begin{equation*}
    \sigma_{G,M}:M\to M[t,t^{-1}]
  \end{equation*}
  could be written as
  \begin{equation*}
    \sum_{d\in \mb{Z}}t^{d}\otimes\sigma_{d}
  \end{equation*}
  where $\sigma_{d}$ is an $R$-module endmorphism of $M$ for each $d$. The law of group actions requires that
  \begin{equation*}
    \sigma_{d}\sigma_{e}=\delta_{de}\sigma_{e}, \quad \sum_{d\in \mb{Z}}\sigma_{d}=1,
  \end{equation*}
  where $\delta_{de}$ is the Kronecker symbol. Hence we have $M\cong \oplus_{d\in \mb{Z}}\sigma_{d}M$.

  Under the above equivalences, we compute the pull-back and push-forward functor of $t^{l}_{R}:\mb{G}_{m,R}\to \mb{G}_{m,R}$, which is represented by
  \begin{align*}
    t^{l*}_{R}:R-\mathrm{Mod}_{\mb{Z}}\to R-\mathrm{Mod}_{\mb{Z}}, \quad \bigoplus_{d\in \mb{Z}}M_{d}\to \bigoplus_{d\in l\mb{Z}}M_{d/l}, \\
    t^{l}_{R*}:R-\mathrm{Mod}_{\mb{Z}}\to R-\mathrm{Mod}_{\mb{Z}}, \quad \bigoplus_{d\in \mb{Z}}M_{d}\to \bigoplus_{d\in \mb{Z}}M_{dl}.
  \end{align*}
  Both $t_{R}^{l*}$ and $t^{l}_{R*}$ are exact functors and maps finite generated $R$-modules to finite generated $R$-modules.

  Given a rational number $i$, we denote
  \begin{equation*}
    \mc{L}^{i}_{R}:=
    \begin{cases}
      \bigoplus_{d=i}R, & i\in \mb{Z},\\
      0, & i\not\in \mb{Z}.
    \end{cases}
  \end{equation*}
Then we have
  \begin{equation*}
    t_{R}^{l*}(\mc{L}_{R}^{i})\cong \mc{L}_{R}^{li},\quad t_{R*}^{l}(\mc{L}_{R}^{i})\cong \mc{L}_{R}^{i/l}.
  \end{equation*}
  Like the case of $\mb{G}_{m,R}$, we have a canonical equivalence between the category of $\mu_{n,R}$ representations and the category of $\mb{Z}/l\mb{Z}$-graded $R$-modules.
  \begin{equation*}
    \mathrm{Rep}(\mu_{n,R})\cong R-\mathrm{Mod}_{\mb{Z}/l\mb{Z}}.
  \end{equation*}
\end{example}
\subsection{Equivariant affine schemes and morphisms}
\begin{definition} 
  Given an $R$-affine group $G$, we define a $G$-equivariant affine scheme as a $G$-representation $R[X]$ with a $G$-equivariant morphism
  \begin{equation*}
    *_{X}:R[X]\otimes_{R}R[X]\to R[X].
  \end{equation*}
  such that $R[X]$ is an commutative $R$-algebra under the multiplication $*_{X}$. We denote
  \begin{equation*}
    \sigma_{G,X}:=\sigma_{G,R[X]},\quad \pi_{G,X}:=\pi_{G,R[X]}.
  \end{equation*}

  We define an $R$-affine equivariant scheme as a pair $(G,X)$ where $G$ is an $R$-affine group and $X$ is a $G$-equivariant affine scheme. Given two affine equivariant schemes $(G,X)$ and $(H,Y)$, an equivariant morphism from $X$ to $Y$ is defined as a pair $(\psi,f)$, where $\psi:G\to H$ is an $R$-affine group morphism and $f:R[Y]\to R[X]$ is an $R$-algebra morphism such that 
  \begin{equation*}
  (\psi\otimes_{R}f)  \circ \sigma_{H,Y}= \sigma_{G,X}\circ g, \quad R[Y]\to R[G]\otimes_{R}R[X]
  \end{equation*}
\end{definition}
The category of equivariant affine schemes and morphisms has the following properties by definitions:
\begin{enumerate}
\item Every $R$-affine group morphism $f:G\to H$ induces a $G$-equivariant structure on $H$. Particularly, there is a canonical $G$-equivariant structure on $\mathrm{Spec}(R)$.
\item Let $\psi:G\to H$ be an $R$-affine group morphism and $Y$ be a $H$-equivariant scheme, then
  \begin{equation*}
    \sigma_{G,X}:=(\psi\otimes_{R}id_{R[X]})\circ \sigma_{H,X}:R[X]\to R[G]\otimes_{R}R[X]
  \end{equation*}
  induces a $G$-equivariant structure on $X$. Moreover, any equivariant morphism of equivariant schemes:
  \begin{equation*}
    (G,X)\xrightarrow{(\psi,f)}(H,Y)
  \end{equation*}
  factors through
  \begin{equation*}
    (G,X)\xrightarrow{(id,f)} (G,Y)\xrightarrow{(\psi,id)} (H,Y).
  \end{equation*}
  \item Given two equivariant morphisms
  \begin{align*}
    (\psi_{1},f_{1}):G_{1}\times_{\mathrm{Spec}(R)}X_{1}\to G\times_{\mathrm{Spec}(R)}X,\\ (\psi_{2},f_{2}):G_{2}\times_{\mathrm{Spec}(R)}X_{2}\to G\times_{\mathrm{Spec}(R)}X,
  \end{align*}
  it induces a $G_{1}\times_{G}G_{2}$-equivariant structure on $X_{1}\times_{X}X_{2}$ by the actions $\sigma_{G_{1},X_{1}}$ and $\sigma_{G_{2},X_{2}}$. 
\end{enumerate}

\begin{example}[GIT quotients]\label{ex2}By \cref{ex1}, there is a canonical equivalence between the opposite category of $\mb{Z}$-graded (resp. $\mb{Z}/l\mb{Z}$) commutative $R$-algebras with the
  category of $\mb{G}_{m,R}$ (resp. $\mu_{l,R}$) equivariant affine schemes. Moreover, given an $\mb{Z}$-graded (resp. $\mb{Z}/l\mb{Z}$-graded) algebra
  \begin{equation*}
    R[X]=\bigoplus_{d\in \mb{Z}}R[X]_{d} \textit{ (resp. } \bigoplus_{d\in \mb{Z}/l\mb{Z}}R[X]_{d} \textit{)},
  \end{equation*}
 
  $R[X]_{0}$ is also an $R$-algebra and all $R[X]_{i}$ is an $R[X]_{0}$-module. The inclusion of $R[X]_{0}$ into $R[X]$ induces a canonical equivariant morphism
  \begin{equation*}
    \phi_{0,X}:(\mb{G}_{m},X)\to (\mathrm{Spec}(R),\mathrm{Spec}(R[X]_{0}))\textit{ or } \phi_{l,X}:(\mu_{l},X)\to (\mathrm{Spec}(R),R[X]_{0}).
  \end{equation*}
  
\end{example}

\subsection{Equivariant modules}
\begin{definition}
  Given an $R$-affine equivariant scheme $(G,X)$, we define a $G$-equivariant $R[X]$-module as a $G$-representation $M$ with a $G$-equivariant morphism
  \begin{equation*}
    *_{M}:R[X]\otimes_{R}M\to M
  \end{equation*}
  such that $*_{m}$ induces an $R[X]$-module structure on $M$.  Given two $G$-equivariant $R[X]$-modules $M,N$, we define a $G$-equivariant $R[X]$-module morphism from $M$ to $N$ as a
  $G$-equivariant morphism $f:M\to N$ such that $f\circ *_{M}=*_{N}\circ id_{R[X]}\otimes_{R}f$. We say that $M$ is finitely generated if it is finitely generated as $R[X]$-module.

  We denote $\mathrm{QCoh}_{G}(X)$ as the abelian category of $G$-equivariant $R[X]$-modules. If $R[X]$ is Noetherian, we denote $\mathrm{Coh}_{G}(X)$ as the abelian category of finite generated $G$-equivariant $R[X]$-modules.
   \end{definition}
 The category of equivariant modules has the following property by definition
 \begin{enumerate}
 \item We have $\mathrm{QCoh}_{G}(\mathrm{Spec}(R))=\mathrm{Rep}(G)$ and $\mathrm{Coh}_{G}(\mathrm{Spec}(R))=\mathrm{Rep}(G)^{fg}$.
 \item The category of $G$-equivariant $R[X]$-modules has a canonical monoidal structure: let $M$ and $N$ be two $G$-equivariant $R[X]$-modules, the $R[X]$-module $M\otimes_{R[X]}N$, regarding as an $R$-module, is the cokernel of
   \begin{equation*}
     R[X]\otimes_{R} M\otimes_{R}N\to M\otimes_{R}N, \quad x\otimes m\otimes n \to *_{M}(xm)\otimes n-x\otimes *_{N}(xn)
   \end{equation*}
   which has a canonical $G$-equivariant structure.
 \item Given an equivariant morphism $(\psi,f):(G,X)\to (H,Y)$, there is a canonical pull-back functor
   \begin{equation*}
     (\psi,f)^{*}:\mathrm{QCoh}_{H}(Y)\to \mathrm{QCoh}_{G}(X)
   \end{equation*}
   defined in the following way:
   \begin{itemize}
   \item when $\psi=id$, we define $(f,id)^{*}M:=M\otimes_{R[X]}R[Y]$;
   \item when $f=id$, we define $(f,id)^{*}M:=f^{*}M$ as a $G$-representation, and the $R[X]$-module structure is still induced by $*_{M}$.
   \item in general, we define $(\psi,f)^{*}:=(\psi,id)^{*}(id,f)^{*}$.
   \end{itemize}
   The pull-back functor $(\psi,f)^{*}$ maps finite generated $H$-equivariant $R[Y]$-modules to finite generated $G$-equivariant $R[X]$-modules. We denote
   \begin{equation*}
     (\psi,f)_{*}:\mathrm{QCoh}_{G}(X)\to \mathrm{QCoh}_{H}(X)
   \end{equation*}
   as the right adjoint functor of $(\psi,f)^{*}$ if it exists.
 \end{enumerate}
 \begin{example}
   \label{ex3}  \label{ext5}
   Let $R[X]$ be a $\mb{Z}$-graded $R$-algebra, with the $\mb{G}_{m,R}$ action induced by the grading. By \cref{ex1} and \cref{ex2}, there is a canonical equivalence between $\mathrm{QCoh}_{\mb{G}_{m,R}}(X)$ (resp. $\mathrm{Coh}_{\mb{G}_{m,,R}}(X)$ if $R[X]$ is Noetherian) with the category of $\mb{Z}$-graded (resp. finite generated) $R[X]$-modules. The similar arguments also work for $\mu_{l,R}$-equivariant schemes.

   Particularly, given a $\mb{Z}/l\mb{Z}$-graded $R$-algebra $R[X]$, under the GIT quotient map
   \begin{equation*}
     \phi_{l,X}:(\mu_{l},X)\to (\mathrm{Spec}(R),\mathrm{Spec}(R[X]_{0}))
   \end{equation*}
   the pull-back and push-forward maps are represented by
   \begin{align*}
     \phi_{l,X}^{*}:\mathrm{QCoh}(\mathrm{Spec}(R[X]_{0}))\to \mathrm{QCoh}_{\mu_{l,R}}(X):&\quad  M\to \bigoplus_{d\in \mb{Z}/l\mb{Z}}M\otimes_{R[X]_{0}}R[X]_{i},\\
     \phi_{l,X*}: \mathrm{QCoh}_{\mu_{l,R}}(X)\to \mathrm{QCoh}(\mathrm{Spec}(R[X]_{0})): &\quad  \bigoplus_{d\in \mb{Z}/l\mb{Z}}M_{i}\to M_{0}
   \end{align*}
   Particularly, $\phi_{0,X*}$ is always exact, and maps finite generated $R[X]$-graded modules to finite generated $R[X]_{0}$-modules if $R[X]$ is finite generated as a $R[X]_{0}$-module and $R[X]_{0}$ is Noetherian.
 \end{example}

\subsection{The $\mb{G}_{m,R}$-equivariant scheme $\mb{A}_{R}^{1}$}
\label{ext3}\label{ex4}
The scheme $\mb{A}^{1}_{R}:=\mathrm{Spec}(R[x])$ has a universal $\mb{G}_{m,R}$-equivariant structure by associating the degree of $x$ as $1$. Given a positive integer $l$, we denote $x^{l}:\mb{A}^{1}\to \mb{A}^{1}$ by
  \begin{equation*}
    \mathbb{Z}[x]\to \mathbb{Z}[x], \quad x\to x^{l}.
  \end{equation*}
  \begin{equation*}
    (t^{l},x^{l}):(\mb{G}_{m},\mb{A}^{1})\to (\mb{G}_{m}, \mb{A}^{1})
  \end{equation*}
  is an equivariant action. Moreover, we have
  
  \begin{equation*}
    (\mb{G}_{m},\mb{A}^{1})_{(t^{l},x^{l})}\times_{(\mb{G}_{m},\mb{A}^{1}),(t^{l},x^{l})}(\mb{G}_{m},\mb{A}^{1})\cong (\mb{G}_{m}\times \mu_{l}, \alpha_{l})
  \end{equation*}
  where $\alpha_{l}:=\mathrm{Spec}(\mb{Z}[x,y]/(x^{l}-y^{l}))$ and the group action is induced by
  \begin{equation*}
    (t,\mu)(x,y)\to (tx,t\mu y)
  \end{equation*}
  There are two equivariant morphisms:
  \begin{equation*}
    \alpha_{l,1},\alpha_{l,2}:(\mb{G}_{m}\times \mu_{l},\alpha_{l})\to (\mb{G}_{m},\mb{A}^{1})
  \end{equation*}
  such that
  \begin{equation*}
    \alpha_{l,1}((t,\mu),(x,y))=(t,x), \quad \alpha_{l,2}((t,\mu),(x,y))=(t\mu,y).
  \end{equation*}

  The affine group $\mb{G}_{m}\times \mu_{l}$ also acts on $\mb{A}^{1}\times \mu_{l}$ by $(t,\mu_{1})\circ (x,\mu_{2})\to (tx, \mu_{1}\mu_{2})$. We have an $\mb{G}_{m}\times \mu_{l}$-equivariant morphism of affine schemes from $\mb{A}^{1}\times \mu_{l}$ to $\alpha_{l}$ by
  \begin{equation*}
    \Delta_{t^{l},x^{l}}:\mb{Z}[x,y]/(x^{l}-y^{l})\to \mb{Z}[x,t]/(t^{l}-1), \textit{ by } x\to x, y\to ty
  \end{equation*}
  Here $x,y,t$ had homogeneous degree $(1,0)$, $(1,1)$ and $(0,1)$ with respect to the $\mb{G}_{m}\times \mu_{l}$ action.

  The morphism $\Delta_{t^{l},x^{l}}$ induces morphisms of equivariant $\mb{Z}[x,y]/(x^{l}-y^{l})$ modules
\begin{equation*}
  \Delta_{t^{l},x^{l}}^{n}:(x,y)^{n}\mb{Z}[x,y]/(x^{l}-y^{l})\to (x^{n})\mb{Z}[x,t]/(t^{l}-1), \quad n\in \mb{Z}_{\geq 0}
\end{equation*}
which is always injective and is an isomorphism when $n=l-1$. For integers  $0\leq m\leq n\leq l-1$, we define 
\begin{equation*}
  \tau_{m,n}:=\frac{(x^{n})\mb{Z}[x,t]/(t^{l}-1)\oplus (x,y)^{m}\mb{Z}[x,y]/(x^{l}-y^{l})}{(x,y)^{n}\mb{Z}[x,y]/(x^{l}-y^{l})}\in \mathrm{Coh}_{\mb{G}_{m}\times \mu_{l}}(\alpha_{l})
\end{equation*}
The modules $\tau_{m,n}$ have the following properties:
\begin{enumerate}
\item we have
  \begin{equation*}
    \tau_{n,n}\cong \Delta_{t^{l},x^{l}*}(x^{n}\mb{Z}[x,t]/(t^{l}-1)), \quad \tau_{n,l-1}\cong (x,y)^{n}\mb{Z}[x,y]/(x^{l}-y^{l})
  \end{equation*}
\item When $n< m\leq l-1$, there exists a canonical injection morphism
  \begin{equation*}
    \tau_{n,m-1}\to \tau_{n,m}, \quad \tau_{n,m}\to \tau_{n+1,m}
  \end{equation*}
  such that cokernels are
  \begin{equation*}
    \bigoplus_{i=0}^{m}\mb{Z}<m,i>,  \textit{ and } \bigoplus_{i=n+1}^{l-1}\mb{Z}<n+1,i>,
  \end{equation*}
  respectively, where $<a,b>$ is the homogeneous degree with respect to the $\mb{G}_{m}\times \mu_{l}$ action.
\end{enumerate}

\section{Derived category of (quasi)-coherent sheaves on algebraic stacks}
\label{sec3}
In this section, we review the background about algebraic stacks and (quasi)-coherent sheaves on algebraic stacks, following \cite[\href{https://stacks.math.columbia.edu/tag/0ELS}{Tag 0ELS}]{stacks-project}. Experts or readers who are only interested in the scheme cases should feel free to read from \cref{universal}.

\subsection{Quotient stacks} Given an $R$-equivariant scheme  $(G,X)$, the groupoid in schemes $(X,G\times_{\mathrm{Spec}(R)}X,\sigma_{G,X},\pi_{G}\times_{\mathrm{Spec}(R)}id_{X},m_{G}\times_{\mathrm{Spec}(R)}id_{X})$ induces an algebraic stack which we denote as $[X/G]$, following \cite[\href{https://stacks.math.columbia.edu/tag/044O}{Tag 044O}]{stacks-project}. Let
\begin{equation*}
\sigma_{X}^{G}:X\to [X/G]  
\end{equation*}
be the canonical quotient map, we have the Cartesian diagram:
\begin{equation*}
  \begin{tikzcd}[column sep=2cm]
    G\times_{\mathrm{Spec}(R)}X \ar{r}{\sigma_{G,X}}\ar{d}{\pi_{G,X}} & X\ar{d}{\sigma_{X}^{G}} \\
    X\ar{r}{\sigma_{X}^{G}} & {[X/G]}.
  \end{tikzcd}
\end{equation*}
where all the morphisms are fppf coverings.

Given an $R$-equivariant morphism  $(\psi,f):(G,X)\to (H,Y)$, the morphism of groupoid in $R$-schemes $(f,f\times_{\mathrm{Spec}(R)}f)$ induces a morphism of quotient stacks:
\begin{equation*}
  [f/\psi]:[X/G]\to [Y/H],
\end{equation*}
Particularly, given a morphism of $R$-affine groups $\psi:G\to H$, we denote
\begin{equation*}
  BG:=[\mathrm{Spec}(R)/G], \quad BH:=[\mathrm{Spec}(R)/H], \quad B\psi:=[id/\psi]:BG\to BH.
\end{equation*}

Given two morphisms $(\psi_{1},f_{1}):(G_{1},X_{1})\to (H,Y)$ and $(\psi_{2},f_{2}):(G_{2},X_{2})\to (H,Y)$, the fiber product of equivariant morphisms induces the fiber product of algebraic stacks if $G_{1}\times_{H}G_{2}$ is also fppf over $R$.
\begin{example}
  \label{ext4}
  We follow Halpern-Leistner \cite{halpern2020derived} to denote $\Theta:=[\mb{A}^{1}/\mb{G}_{m}]$ and denote
  \begin{equation*}
    \theta^{l}:=[x^{l}/t^{l}]:\Theta\to \Theta.
  \end{equation*}
  Then the diagonal of $\theta^{l}$, which we denote as $\Delta_{\theta^{l}}$ is represented by
  \begin{equation*}
  [\Delta_{t^{l},x^{l}}/id]:[\mb{A}^{1}\times \mu_{l}/\mb{G}_{m}\times\mu_{l}]\to [\alpha_{l}/\mb{G}_{m}\times \mu_{l}]
\end{equation*}
in  \cref{ex4}.
\end{example}

\subsection{Morphisms of algebraic stacks}

We refer to the Stacks Project  \cite[\href{https://stacks.math.columbia.edu/tag/04XM}{Tag 04XM}]{stacks-project}, for the property of being flat, locally of finite type, quasi-compact, quasi-separated and having an affine diagonal for morphisms of algebraic stacks. Moreover, we mention the following facts
\begin{enumerate}
\item Given a morphism $f:\mc{X}\to \mc{Y}$ such that both $\mc{X}$ and $\mc{Y}$ are represented by schemes, the definition of the above properties coincide with the definition for morphisms of schemes.
\item The above properties are stable under base change, i.e. given a Cartesian diagram of algebraic stacks
  \begin{equation}
    \label{eqn1}
      \begin{tikzcd}
        \mc{X}'\ar{d}{f'}\ar{r} & \mc{X} \ar{d}{f}\\
        \mc{Y}'\ar{r}{g} & \mc{Y},
      \end{tikzcd}\tag{*}
    \end{equation}
    $f'$ is flat (resp. locally of finite type, quasi-compact, quasi-separated, with an affine diagonal) if $f$ is flat (resp. locally of finite type, quasi-compact, quasi-separated, with an affine diagonal). Moreover, if $g$ is a smooth atlas, $f$ is flat (resp. locally of finite type, quasi-compact, quasi-separated, with an affine diagonal) if $f'$ is flat (resp. locally of finite type, quasi-compact, quasi-separated, with an affine diagonal).
  \item We consider a morphism of affine quotient stacks
    \begin{equation*}
      [f/\psi]:[X/G]\to [Y/H].
    \end{equation*}
    The morphism $[f/\psi]$ is always quasi-compact. It is flat (resp. of locally finite type) if $f$ is flat (resp. locally of finite type). It is quasi-separated and with an affine diagonal if $\psi$ is faithfully flat.
  \item Given a quasi-compact morphism of algebraic stacks $f:\mc{X}\to \mc{Y}$ such that $\mc{Y}$ is represented by an affine scheme, then $\mc{X}$ has a smooth cover by an affine scheme.
\end{enumerate}

\begin{example}
  The morphisms
  \begin{equation*}
    \theta^{l}:\Theta\to \Theta, \quad Bt^{l}:B\mb{G}_{m}\to B\mb{G}_{m}
  \end{equation*}
  are both locally of finite type, flat, quasi-compact, quasi-separated, and have affine diagonals.
\end{example}

\subsection{Quasi-coherent sheaves on algebraic stacks}

We refer to \cite[\href{https://stacks.math.columbia.edu/tag/06WU}{Tag 06WU}]{stacks-project} for the abelian category of quasi-coherent sheaves $\mathrm{QCoh}(\mc{Y})$ for an algebraic stack $\mc{Y}$. It has the following properties:
\begin{enumerate}
\item for an affine quotient stack $[X/G]$, we have a canonical equivalence:
  \begin{equation*}
    \mathrm{QCoh}([X/G])\cong \mathrm{QCoh}_{G}(X).
  \end{equation*}
\item for a morphism of algebraic stacks $f:\mc{X}\to \mc{Y}$ the pull-back $f^{*}$ preserves quasi-coherent sheaves and induces a functor
\begin{equation*}
  f^{*}:\mathrm{QCoh}(\mc{Y})\to \mathrm{QCoh}(\mc{X}).
\end{equation*}
Moreover, for a morphism $[f/\psi]:[X/G]\to [Y/H]$ of affine quotient stacks, the pull-back functor $[f/\psi]^{*}$ is represented by $(f,\psi)^{*}$.
\item when $f$ is quasi-compact and quasi-separated, it induces a functor
\begin{equation*}
  f_{*}:\mathrm{QCoh}(\mc{X})\to \mathrm{QCoh}(\mc{Y})
\end{equation*}
which is a right adjoint to $f^{*}$, following Proposition 103.11.1 of \cite[\href{https://stacks.math.columbia.edu/tag/070A}{Tag 070A}]{stacks-project}.  A similar construction will produce higher direct image functors $R^{i}f_{*}:\mathrm{QCoh}(\mc{X})\to \mathrm{QCoh}(\mc{Y})$ for all integers $i\geq 0$.
\item if $\mc{X}$ is locally Noetherian, i.e. there is a smooth atlas of $\mc{X}$ which is a locally Noetherian scheme, we can define the abelian category of coherent sheaves $\mathrm{Coh}(\mc{X})$ which is a full subcategory of $\mathrm{QCoh}(\mc{X})$, and $f^{*}$ induces a pull-back functor for the category of coherent sheaves if $f$ is a morphism of locally Noetherian algebraic stacks, which we also denote as $f^{*}$. Moreover, we have a canonical equivalence:
  \begin{equation*}
    \mathrm{Coh}([X/G])\cong  \mathrm{Coh}_{G}(X),
  \end{equation*}
if $R[X]$ is Noetherian.
\item The abelian category of quasi-coherent sheaves satisfies the fppf descent in the sense of flat-fppf sites (see \cite[\href{https://stacks.math.columbia.edu/tag/08MZ}{Tag 08MZ}]{stacks-project} for the introduction).
\end{enumerate}

The quasi-coherent sheaves over algebraic stacks satisfy the following flat base change theorem:
\begin{theorem}[Flat base change theorem, Lemma 103.4.1 of {\cite[\href{https://stacks.math.columbia.edu/tag/076W}{Tag 076W}]{stacks-project}}, Lemma 103.7.2 and Lemma 103.7.3 of {\cite[\href{https://stacks.math.columbia.edu/tag/0760}{Tag 0760}]{stacks-project}}] Given a flat morphism $f:\mc{X}\to \mc{Y}$ of algebraic stacks, the pullback functor
  \begin{equation*}
    f^{*}:\mathrm{QCoh}(\mc{Y})\to \mathrm{QCoh}(\mc{X})
  \end{equation*}
  is an exact functor. Moreover, a Cartesian diagram of algebraic stacks:
  \begin{equation*}
    \begin{tikzcd}
      \mc{X}_{1}\ar{r}{f'}\ar{d}{g'} & \mc{X}_{2}\ar{d}{g} \\
      \mc{Y}_{1}\ar{r}{f} & \mc{Y}_{2},
    \end{tikzcd}
  \end{equation*}
 such that $g$ is quasi-compact and quasi-separated and $f$ is flat induces canonical equivalence of natural transformations:
  \begin{equation*}
    f^{*}R^{i}g_{*}\cong R^{i}g_{*}'f'^{*}:\mathrm{QCoh}(\mc{X}_{2})\to\mathrm{QCoh}(\mc{Y}_{1}) 
  \end{equation*}
  for all non-negative integers $i$.
\end{theorem}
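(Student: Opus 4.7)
The plan is to reduce both parts of the statement to the corresponding facts for morphisms of schemes by passing to smooth atlases and invoking fppf descent for quasi-coherent sheaves (property (5) above). No ingredient beyond the scheme-level flat base change theorem is needed; the work is in transporting that result across the descent.

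For the exactness of $f^{*}$, I would pick a smooth surjection $V\to \mc{Y}$ from a scheme $V$ and form the base change $U := V\times_{\mc{Y}}\mc{X}$. Smoothness is preserved under base change, so $U$ is a smooth scheme atlas of $\mc{X}$; flatness is likewise preserved, so $U\to V$ is a flat morphism of schemes. By fppf descent, exactness of a sequence in $\mathrm{QCoh}(\mc{Y})$ is detected after restriction to $V$, and analogously for $\mathrm{QCoh}(\mc{X})$ via $U$. Under these identifications $f^{*}$ corresponds to pullback along the flat scheme morphism $U\to V$, which is classically exact on quasi-coherent modules.

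For the base change isomorphism, adjunction produces a canonical natural transformation $f^{*}R^{i}g_{*}\to R^{i}g'_{*}f^{\prime *}$, and the task is to prove it is invertible. I would fix a smooth scheme atlas $W_{2}\to \mc{X}_{2}$ and a smooth scheme atlas $V_{2}\to \mc{Y}_{2}$, and further choose scheme atlases $W_{1}$ and $V_{1}$ of $\mc{X}_{1}$ and $\mc{Y}_{1}$ refining the pullbacks of $W_{2}$ and $V_{2}$ along $f'$ and $f$ respectively. Because $g$ is quasi-compact and quasi-separated, the restriction of $R^{i}g_{*}$ to $V_{2}$ is computed by the \v{C}ech-to-derived spectral sequence attached to the simplicial scheme built from $W_{2}\times_{\mc{X}_{2}}V_{2}$, and the analogous statement holds for $R^{i}g'_{*}$ over $V_{1}$. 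Flatness of $f$ together with the classical flat base change for quasi-compact quasi-separated morphisms of schemes yields termwise isomorphisms between the two spectral sequences, and fppf descent then assembles these into a natural isomorphism of functors valued in $\mathrm{QCoh}(\mc{Y}_{1})$.

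The main obstacle is not a new idea but the naturality bookkeeping: one must verify that the comparison maps produced on atlases glue, independently of the choice of atlas, to the intrinsically defined adjunction-theoretic base change morphism on the stacks. This verification is precisely the content of the Stacks Project lemmas cited in the statement; while it requires careful tracking of the simplicial descent data and of the compatibility of the comparison with refinements of atlases, it introduces no genuinely new obstruction beyond the scheme case.
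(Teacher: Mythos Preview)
The paper does not supply its own proof of this theorem; it is quoted as a background result with explicit Stacks Project tags, and no proof environment follows the statement. So there is nothing in the paper to compare your argument against. Your outline is essentially the standard reduction-to-schemes strategy that those cited lemmas implement.

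One correctable slip: in your first reduction you form $U = V\times_{\mc{Y}}\mc{X}$ and call it ``a smooth scheme atlas of $\mc{X}$''. Unless $f$ is representable by schemes, $U$ is only an algebraic stack, not a scheme; the base change of a scheme along a non-representable morphism need not be a scheme. The fix is routine---choose a further smooth scheme atlas $U'\to U$, so that $U'\to\mc{X}$ is a smooth atlas and $U'\to V$ is flat (as a composite of the smooth $U'\to U$ with the flat $U\to V$)---but as written the sentence is incorrect. The same caveat applies to your choices of $W_{1},V_{1}$ in the second part. With that adjustment your sketch is sound.
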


\subsection{Universally good morphisms}
\label{universal}
  \begin{definition}[Universally good morphisms]Let $f:\mc{X}\to \mc{Y}$ be a morphism of algebraic stacks. We say that $f$ is a universally good morphism if $f$ is locally of finite type, quasi-compact, quasi-separated, with an affine diagonal, and for any Cartesian diagram of algebraic stacks
    \begin{equation}
      \label{eqn2}
      \begin{tikzcd}
        \mc{X}'\ar{d}{f'}\ar{r} & \mc{X} \ar{d}{f}\\
        \mc{Y}'\ar{r}{g} & \mc{Y},
      \end{tikzcd}
    \end{equation}
    \begin{enumerate}
    \item  the functor
      \begin{equation*}
      f_{*}:QCoh(\mc{X})\to QCoh(\mc{Y}).
    \end{equation*}
    is exact;
  \item if $\mc{Y}'$ is locally Noetherian,  then $f_{*}$ maps coherent sheaves to coherent sheaves;
    \item the canonical morphism $\mc{O}_{\mc{Y}'}\to f_{*}\mc{O}_{\mc{X}'}$ is an isomorphism,
    \end{enumerate}
  \end{definition}
  \begin{remark}
    Obviously, the terminology ``universally good'' is from Alper's ``good moduli space'' in \cite{MR3237451}.
  \end{remark}
  By the fppf descent of quasi-coherent sheaves (resp. coherent sheaves) over algebraic stacks, we have
  \begin{lemma}Universally good morphisms are stable under base change and compositions. Moreover, given a Cartesian diagram of algebraic stacks in \cref{eqn2} such that $g$ is a fppf atlas of $\mc{Y}$, $f$ is universally good if and only if $f'$ is universally good.

    Particularly, assuming a morphism of algebraic stacks $f:\mc{X}\to \mc{Y}$ is locally of finite type, quasi-compact, quasi-separated and with an affine diagonal, $f$ is universally good if and only if for any Cartesian diagram of algebraic stacks in \cref{eqn2} such that $\mc{Y}'\cong \mathrm{Spec}(R)$, the functor
    \begin{equation*}
      f_{*}':\mathrm{QCoh}(\mc{X}')\to R-\mathrm{Mod}, \quad F\to \Gamma(\mc{X}',F)
    \end{equation*}
    is exact, maps coherent sheaves to finite generated $R$-modules if $R$ is Noetherian and the canonical morphism $R\to \Gamma(\mc{X}',\mc{O}_{\mc{X}'})$ is an isomorphism.
  \end{lemma}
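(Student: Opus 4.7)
The plan is to verify the claims in sequence, leveraging the fppf descent of (quasi)-coherent sheaves and the flat base change theorem recalled earlier in the excerpt.

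\emph{Base change and composition.} Base-change stability is essentially automatic: if $f$ is universally good and $g:\mc{Y}'\to \mc{Y}$ is arbitrary, the induced $f':\mc{X}'\to \mc{Y}'$ inherits the four geometric conditions by their stated base-change stability, and any Cartesian square over $\mc{Y}'$ is also one over $\mc{Y}$, so conditions~(1)--(3) transfer verbatim. For a composition $g\circ f$ of universally good morphisms, the geometric conditions compose cleanly; given a base change along $h:\mc{Z}'\to \mc{Z}$, one forms the double Cartesian square with intermediate stack $\mc{Y}\times_{\mc{Z}}\mc{Z}'$, whose two edges $g'$ and $f''$ are universally good by the previous sentence. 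Then $(g\circ f)'_{*}=g'_{*}\circ f''_{*}$ is a composition of exact functors, the structure-sheaf condition follows from the composed isomorphism $\mc{O}_{\mc{Z}'}\xrightarrow{\sim}g'_{*}\mc{O}_{\mc{Y}'}\xrightarrow{\sim}g'_{*}f''_{*}\mc{O}_{\mc{X}'}$, and coherent-to-coherent preservation composes directly.

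\emph{fppf-local checking on target.} One direction is base change; for the converse, assume $g:\mc{Y}'\to \mc{Y}$ is an fppf atlas and the base change $f'$ is universally good. The four geometric conditions on $f$ descend from those on $f'$ since they are fppf-local on the target. For conditions~(1)--(3), given any base change $h:\mc{Y}''\to \mc{Y}$, pull $g$ back along $h$ to obtain an fppf cover $\tilde g:\mc{Y}''\times_{\mc{Y}}\mc{Y}'\to \mc{Y}''$; the hypothesis combined with base-change stability yields the three conditions on the morphism over $\mc{Y}''\times_{\mc{Y}}\mc{Y}'$. The flat base change theorem provides a natural isomorphism $\tilde g^{*}(f_{h})_{*}\cong (\tilde f)_{*}(\tilde g')^{*}$, and faithful flatness of $\tilde g$ together with fppf descent for quasi-coherent (and coherent) sheaves transfers exactness, the $\mc{O}$-isomorphism, and coherent-to-coherent preservation back down to $\mc{Y}''$.

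\emph{Reduction to affine base.} Assume the three sheaf-theoretic conditions hold for every Cartesian diagram with $\mc{Y}'\cong \mathrm{Spec}(R)$. For an arbitrary base change $h:\mc{Y}''\to \mc{Y}$, choose a smooth atlas $\sqcup_{i}\mathrm{Spec}(R_{i})\to \mc{Y}''$ by a disjoint union of affines, which constitutes an fppf atlas of $\mc{Y}''$. By hypothesis the three conditions hold on each $\mathrm{Spec}(R_{i})$ after pullback, hence on the disjoint union, and the fppf-local statement from the previous paragraph propagates them to $\mc{Y}''$ itself. The main obstacle will be the middle fppf-descent step: the descent of exactness and of the structure-sheaf isomorphism along a faithfully flat cover must be extracted carefully from the flat base change theorem, and one must verify that the coherent-to-coherent property survives descent in the locally Noetherian setting — standard, but requiring explicit invocation of fppf descent for (quasi)-coherent sheaves.
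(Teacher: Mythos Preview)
Your proposal is correct and matches the paper's approach: the paper does not give a separate proof for this lemma, but states it as an immediate consequence of fppf descent of (quasi)-coherent sheaves (the sentence ``By the fppf descent of quasi-coherent sheaves (resp.\ coherent sheaves) over algebraic stacks, we have'' precedes the lemma). Your outline simply spells out in detail how fppf descent and flat base change yield each clause, which is exactly what the paper is invoking.
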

  \begin{example}
    \label{exp8}
    Given an $R$-affine group $G$, we say that $BG$ is linearly reductive if the morphism $BG\to \mathrm{Spec}(R)$ is universally good. Particularly, $\mb{G}_{m,R}$ and $\mu_{l,R}$ are all linearly reductive groups by \cref{ex1}
\end{example}

\begin{example}
  Let $R[X]$ be an $\mb{Z}/l\mb{Z}$-graded $R$-algebra which is finite generated as a $R[X]_{0}$-module. Then the quotient stack morphism
  \begin{equation*}
    \psi_{l,X}:[X/\mu_{l,R}]\to \mathrm{Spec}(R[X]_{0})
  \end{equation*}
  is universally good by  \cref{ex3}.

  Particularly,  we consider the Cartesian diagram of algebraic stacks:
    \begin{equation*}
      \begin{tikzcd}[column sep=2cm]
        \left[\mb{A}^{1}/\mu_{l}\right]\ar{r}{\left[id/\mu_{l}\right]} \ar{d}{\left[x^{l}/t^{l}\right]}  & \left[\mb{A}^{1}/\mb{G}_{m}\right] \ar{d}{\left[x^{l}/t^{l}\right]}\\
         \mb{A}^{1}\ar{r}   & \left[\mb{A}^{1}/\mb{G}_{m}\right].
      \end{tikzcd}
    \end{equation*}
    The morphism
        \begin{equation*}
    [x^{l}/t^{l}]:[\mb{A}^{1}/\mu_{l}]\to \mb{A}^{1}  
  \end{equation*}
  is universally good by the above argument, and thus the morphism
   \begin{equation*}
    \theta^{l}:\Theta\to \Theta
  \end{equation*}
  is also universally good.
  \end{example}

  \begin{lemma}
    \label{lem2}
    Given a universally good morphism $f:\mc{X}\to \mc{Y}$ of algebraic stacks, the higher direct image vanishes for any $\mc{F}\in QCoh(\mc{X})$, the higher direct image vanishes:
    \begin{equation*}
      R^{i}f_{*}(\mc{F})=0, \quad i>0.
    \end{equation*}
  \end{lemma}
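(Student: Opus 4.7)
The plan is to combine the flat base change theorem with a dimension shifting argument using enough injectives in $\mathrm{QCoh}(\mc{X})$. First, to check that $R^{i}f_{*}\mc{F}=0$ as a quasi-coherent sheaf on $\mc{Y}$, by fppf descent of quasi-coherent sheaves it suffices to check the vanishing after pullback along an fppf atlas $g:\mathrm{Spec}(R)\to \mc{Y}$. Applying the flat base change theorem recalled above gives $g^{*}R^{i}f_{*}\mc{F}\cong R^{i}f'_{*}(g'^{*}\mc{F})$, where $f':\mc{X}'\to \mathrm{Spec}(R)$ is the base change; since universally good morphisms are stable under base change by the lemma proved earlier, we are reduced to the case $\mc{Y}=\mathrm{Spec}(R)$ affine. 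In that case, the universally good hypothesis is precisely that the global sections functor $\Gamma(\mc{X},-):\mathrm{QCoh}(\mc{X})\to R\text{-Mod}$ is exact, and $R^{i}f_{*}\mc{F}$ is represented by the $R$-module $H^{i}(\mc{X},\mc{F})$.

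Next I would run the dimension shifting trick. The category $\mathrm{QCoh}(\mc{X})$ is a Grothendieck abelian category and hence has enough injectives; pick an embedding $0\to \mc{F}\to \mc{I}\to \mc{Q}\to 0$ in $\mathrm{QCoh}(\mc{X})$ with $\mc{I}$ injective. Applying the exact functor $f_{*}$ yields a short exact sequence, so the long exact sequence of higher direct images forces an injection $R^{1}f_{*}\mc{F}\hookrightarrow R^{1}f_{*}\mc{I}$ together with, for $i\geq 1$, a canonical isomorphism $R^{i+1}f_{*}\mc{F}\cong R^{i}f_{*}\mc{Q}$. Given the $f_{*}$-acyclicity of injective quasi-coherent sheaves, the injection forces $R^{1}f_{*}\mc{F}=0$; an induction on $i$ via the dimension shift then yields $R^{i}f_{*}\mc{F}=0$ for every $i>0$, which is the claim.

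The main obstacle is justifying the $f_{*}$-acyclicity of injective quasi-coherent sheaves in the sense of the $R^{i}f_{*}$ appearing in the Stacks project, since those are defined via injective resolutions in the category of all $\mc{O}_{\mc{X}}$-modules on the lisse-\'etale (or flat-fppf) site rather than inside $\mathrm{QCoh}(\mc{X})$. This requires a comparison identifying the derived functors of the $\mathrm{QCoh}$-valued $f_{*}$ with the Stacks project's $R^{i}f_{*}$, for quasi-compact quasi-separated morphisms of algebraic stacks; once this comparison is cited from the results around Stacks project Tag 070A, the dimension shifting argument concludes. Alternatively, exactness of $\Gamma$ already implies that all higher derived functors computed entirely inside $\mathrm{QCoh}(\mc{X})$ vanish, and so the proof reduces to checking that the two flavors of higher direct image agree, which is a standard local computation.
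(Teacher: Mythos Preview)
Your reduction to an affine base via flat base change matches the paper's first step. After that the two approaches diverge. The paper does not touch injectives in $\mathrm{QCoh}(\mc{X})$; instead it picks a smooth affine cover $u:U\to\mc{X}$ (possible since $f$ is quasi-compact and $\mc{Y}$ is now affine), observes that all iterated fibre products $U_{n}=U\times_{\mc{X}}\cdots\times_{\mc{X}}U$ are affine because $f$ has affine diagonal, and forms the \v{C}ech resolution $0\to\mc{F}\to u_{1*}u_{1}^{*}\mc{F}\to u_{2*}u_{2}^{*}\mc{F}\to\cdots$ inside $\mathrm{QCoh}(\mc{X})$. Each term $u_{n*}u_{n}^{*}\mc{F}$ is $f_{*}$-acyclic in the big-site sense simply because $f\circ u_{n}$ is a morphism of affine schemes, so a Grothendieck spectral sequence identifies $R^{i}f_{*}\mc{F}$ with the $i$-th cohomology of $f_{*}$ applied termwise to this complex; exactness of $f_{*}$ on $\mathrm{QCoh}$ then kills that cohomology.

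The upshot is that the paper's route manufactures an explicit $f_{*}$-acyclic resolution by quasi-coherent sheaves and thereby sidesteps exactly the obstacle you flagged: it never needs to know whether an injective of $\mathrm{QCoh}(\mc{X})$ is acyclic for the $R^{i}f_{*}$ computed on the flat-fppf site, nor does it invoke a comparison between the $\mathrm{QCoh}$-internal derived functor and the Stacks project's one. Your argument is not wrong in spirit, but it outsources the substantive step to that comparison statement, which for algebraic stacks is a genuine theorem rather than a formality; the \v{C}ech approach keeps the lemma self-contained under the hypotheses (quasi-compact, quasi-separated, affine diagonal) already built into the definition of universally good.
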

  \begin{proof}
    As the higher direct image is fppf local, we can assume that $\mc{Y}$ is an affine scheme. As $f$ is quasi-compact, we have a smooth cover $u: U\to \mc{Y}$ where $U$ is an affine scheme. Then for any integer $n>0$, the $n$-Cartesian product $U_{n}:=\underbrace{U\times_{\mc{X}}\times \cdots \times_{\mc{X}}U}_{n \text{ times}}$ is also an affine scheme, as the diagonal $\Delta_{\mc{X}}:\mc{X}\to \mc{X}\times \mc{X}$ is affine. The projection morphisms $u_{n}:U_{n}\to \mc{X}$ are also affine morphisms We denote $u_{n}:U_{n}\to \mc{X}$ as all $U_{n+1}$ are affine. Thus we have the long exact sequence of Cech complexes in $\mathrm{QCoh}(\mc{Y})$
    \begin{equation*}
      0\to \mc{F}\to u_{1*}u_{1}^{*}\mc{F}\to u_{2*}u_{2}^{*}\mc{F}\to \cdots.
    \end{equation*}
    By the Grothendieck spectral sequence and the fact that all $U_{n}$ and $u_{n}$ are affine, $R^{i}f_{*}\mc{F}$ is the cohomology of the following complex
    \begin{equation*}
       f_{*}u_{1*}u_{1}^{*}\mc{F}\to f_{*}u_{2*}u_{2}^{*}\mc{F}\to \cdots
     \end{equation*}
     and the higher cohomology vanishes because $f_{*}$ is an exact functor.
   \end{proof}
   \subsection{Derived category of (quasi)-coherent sheaves on algebraic stacks}
   \label{der}
   We refer to \cite[\href{https://stacks.math.columbia.edu/tag/07B5}{Tag 07B5}]{stacks-project} for the derived category of (quasi)-coherent sheaves on algebraic stacks. Given an algebraic stack $\mc{X}$, we denote $D_{qcoh}(\mc{O}_{\mc{X}})$ (resp. $D^{+}_{qcoh}(\mc{O}_{\mc{X}})$, $D^{-}_{qcoh}(\mc{O}_{\mc{X}})$, $D^{b}_{qcoh}(\mc{O}_{\mc{X}})$) as the derived category of quasi-coherent sheaves (resp. with cohomology bounded from below, above and both sides) on $\mc{X}$. We denote $D_{Coh}(\mc{O}_{\mc{X}})$ (resp. $D^{+}_{coh}(\mc{O}_{\mc{X}})$, $D^{-}_{coh}(\mc{O}_{\mc{X}})$, $D^{b}_{coh}(\mc{O}_{\mc{X}})$) as the full subcategory of $D_{qcoh}(\mc{O}_{\mc{X}})$ (resp. $D^{+}_{qcoh}(\mc{O}_{\mc{X}})$, $D^{-}_{qcoh}(\mc{O}_{\mc{X}})$, $D^{b}_{qcoh}(\mc{O}_{\mc{X}})$) with coherent cohomologies.

   Given a morphism $f:\mc{X}\to \mc{Y}$, the pull-back functor of quasi-coherent sheaves induce the derived functors
   \begin{equation*}
     Lf^{*}:D_{qcoh}(\mc{O}_{\mc{Y}})\to D_{qcoh}(\mc{O}_{\mc{X}}),
   \end{equation*}
   which maps complexes with bounded from above (resp. coherent) cohomologies to complexes with bounded from above (resp. coherent) cohomologies. Moreover, it maps complexes with bounded from below (resp. bounded) cohomologies to complexes with bounded from below (resp. bounded) cohomologies if $f$ is flat.

   Moreover, when $f:\mc{X}\to \mc{Y}$ is quasi-compact and quasi-separated, then the push-forward functor induces the derived functor
   \begin{equation*}
     Rf_{*}:D_{qcoh}^{+}(\mc{O}_{\mc{X}})\to D_{qcoh}^{+}(\mc{O}_{\mc{Y}}).
   \end{equation*}
   If $f$ is universally good, then $Rf_{*}$ maps complexes with bounded cohomologies (resp. coherent cohomologies) to complexes with bounded cohomologies (resp. coherent cohomologies).

   Like the flat base change theorem for quasi-coherent sheaves, we also have the following theorem for the derived category of quasi-coherent sheaves:
   \begin{theorem}[Proposition 2.3.2 of \cite{gaitsgory2019study}]
    A  Cartesian diagram of algebraic stacks:
  \begin{equation*}
    \begin{tikzcd}
      \mc{X}_{1}\ar{r}{f'}\ar{d}{g'} & \mc{X}_{2}\ar{d}{g} \\
      \mc{Y}_{1}\ar{r}{f} & \mc{Y}_{2},
    \end{tikzcd}
  \end{equation*}
 such that $g$ is quasi-compact and quasi-separated and $f$ is flat induces canonical equivalence of natural transformations:
  \begin{equation*}
    Lf^{*}Rg_{*}\cong Rg_{*}'Lf'^{*}:D^{+}_{qcoh}(\mc{X}_{2})\to D^{+}_{qcoh}(\mc{Y}_{1}) 
  \end{equation*}
\end{theorem}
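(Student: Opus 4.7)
The plan is to deduce the derived base change from the underived flat base change theorem for quasi-coherent sheaves already stated in the excerpt, leveraging exactness of the flat pullback $Lf^{*}=f^{*}$. First I would construct the base-change natural transformation $\alpha : Lf^{*}Rg_{*} \to Rg'_{*}Lf'^{*}$ in the standard way: applying $Rg_{*}$ to the unit $\mathrm{id} \to Rf'_{*}Lf'^{*}$ yields a morphism $Rg_{*} \to Rg_{*}Rf'_{*}Lf'^{*} \cong Rf_{*}Rg'_{*}Lf'^{*}$ (legitimate because all four morphisms are quasi-compact and quasi-separated, so $R(g\circ f')_{*} \cong R(f\circ g')_{*}$ by composition of derived push-forwards), and adjunction $(Lf^{*}, Rf_{*})$ then produces $\alpha$.

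To prove $\alpha$ is an equivalence on $F \in D^{+}_{qcoh}(\mc{X}_{2})$, I use that $f$ flat implies $f'$ flat by base change, so $Lf^{*}$ and $Lf'^{*}$ are both exact. Consequently $H^{n}(Lf^{*}Rg_{*}F) \cong f^{*}R^{n}g_{*}F$ and $H^{q}(Lf'^{*}F) \cong f'^{*}H^{q}(F)$. Consider the hypercohomology spectral sequence
\begin{equation*}
E_{2}^{p,q} = R^{p}g'_{*}\bigl(f'^{*}H^{q}(F)\bigr) \Longrightarrow H^{p+q}(Rg'_{*}Lf'^{*}F),
\end{equation*}
which converges for bounded-below $F$. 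By the underived flat base change theorem already cited in the excerpt, $R^{p}g'_{*}(f'^{*}H^{q}(F)) \cong f^{*}R^{p}g_{*}H^{q}(F)$, which matches, after applying the exact functor $f^{*}$, the analogous $E_{2}$ page computing $f^{*}H^{p+q}(Rg_{*}F)$. Compatibility of $\alpha$ with these two spectral sequences then forces $\alpha$ to be a quasi-isomorphism on abutments.

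The principal technical obstacle is verifying that $\alpha$, defined abstractly through adjunctions, actually realizes the correct comparison map between the two spectral sequences. A cleaner route, modeled on the Čech argument in the proof of \cref{lem2}, is to work fppf-locally on $\mc{Y}_{2}$ and fix a smooth affine cover $u : U \to \mc{X}_{2}$ (available since $g$ is quasi-compact and quasi-separated) together with its Čech nerve $u_{\bullet} : U_{\bullet} \to \mc{X}_{2}$, whose terms $U_{n}$ are all affine with affine projections to $\mc{X}_{2}$ (by affineness of $\Delta_{\mc{X}_{2}}$). Then $Rg_{*}F$ is computed as the totalization of the Čech complex $g_{*}u_{n*}u_{n}^{*}F$, on which the underived flat base change applies termwise along affine-by-affine squares, and pullback along the flat morphism $f$ commutes with this totalization by exactness of $f^{*}$. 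This descent-style proof reduces the derived statement entirely to the affine/underived case already established, and simultaneously identifies $\alpha$ with the map arising from termwise classical base change.
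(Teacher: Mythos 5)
The paper does not prove this statement at all: it is imported verbatim as Proposition 2.3.2 of Gaitsgory--Rozenblyum, so there is no internal proof to compare against. Your attempt to deduce the derived statement from the underived flat base change theorem quoted earlier in the section is a reasonable plan, but as written it has three concrete gaps. First, your construction of $\alpha$ passes through $Rf_{*}$ and asserts that ``all four morphisms are quasi-compact and quasi-separated''; the hypotheses only give that $g$ (hence $g'$, by base change) is quasi-compact and quasi-separated and that $f$ (hence $f'$) is flat. A flat morphism need not be quasi-compact, so $Rf_{*}$ need not exist on $D^{+}_{qcoh}$. The standard fix is to build $\alpha$ from the other pair of adjunctions, as the composite $Lf^{*}Rg_{*}\to Rg'_{*}Lg'^{*}Lf^{*}Rg_{*}\cong Rg'_{*}Lf'^{*}Lg^{*}Rg_{*}\to Rg'_{*}Lf'^{*}$, using the unit of $(Lg'^{*},Rg'_{*})$ and the counit of $(Lg^{*},Rg_{*})$, which only requires $g$ and $g'$ to be quasi-compact and quasi-separated.

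Second, your ``cleaner route'' modeled on the \v{C}ech argument of \cref{lem2} silently uses that the terms $U_{n}$ of the \v{C}ech nerve are affine with affine projections to $\mc{X}_{2}$; that requires the diagonal of $\mc{X}_{2}$ (over an affine chart of $\mc{Y}_{2}$) to be affine, which is part of the paper's definition of a universally good morphism but is \emph{not} among the hypotheses of this theorem, where $g$ is merely quasi-compact and quasi-separated. Third, the spectral-sequence argument, which is the route that actually matches the stated generality, is left at the point you yourself identify as the principal obstacle: you never verify that $\alpha$ induces the classical base change map on the $E_{2}$-pages. The clean way around this is d\'evissage rather than a full comparison of spectral sequences: both composite functors carry $D^{\geq 0}_{qcoh}$ into $D^{\geq 0}_{qcoh}$, and for fixed $n$ the sheaf $H^{n}$ of either side of $\alpha(F)$ depends only on the truncation $\tau_{\leq n}F$; the truncation triangles then reduce the claim, by induction on the number of nonvanishing cohomology sheaves, to the case where $F$ is a single quasi-coherent sheaf in degree $0$, where one checks directly from the adjunction description that $H^{n}(\alpha)$ is the classical base change map $f^{*}R^{n}g_{*}F\to R^{n}g'_{*}f'^{*}F$, an isomorphism by the underived theorem. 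With these three repairs your argument goes through and gives a self-contained proof of the cited result.
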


\section{Semir-orthogonal decomposition of the root stacks}
\label{sec4}
In this section, we give a constructive proof of the semi-orthogonal decomposition of the derived category of bounded-(from-below) (quasi)-coherent sheaves on the root stack.

\subsection{Semi-orthogonal decomposition of the $\Theta_{R}$}
We first give a semi-orthogonal decomposition for the derived category of (quasi)-coherent sheaves of 
\begin{equation*}
\Theta_{R}:=\Theta\times_{\mathrm{Spec}(\mb{Z})}\mathrm{Spec}(R).  
\end{equation*}

We recall a lemma about effective Cartier divisor:
\begin{lemma}{\cite[\href{https://stacks.math.columbia.edu/tag/0B4A}{Tag 0B4A}]{stacks-project}}
  \label{lem3}
  Let $f:X\to Z$ be an effective Cartier divisor such that $Z$ is an algebraic stack. Then the right adjoint functor of
  \begin{equation*}
  Rf_{*}:D^{+}_{qcoh}(X)\to D^{+}_{qcoh}(Z) 
  \end{equation*}
  which we denote as $Lf^{!}$ is $Lf^{*}(-\otimes^{L}\mc{O}(X))[-1]$. Moreover, the ad junction formula induces a canonical triangle of derived functors
  \begin{equation*}
     L_{f}^{*}R_{f*}\to id \to L_{f}^{!}R_{f*}.
  \end{equation*}
\end{lemma}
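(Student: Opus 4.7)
The plan is to exploit the basic short exact sequence of quasi-coherent sheaves
\begin{equation*}
0 \to \mc{O}_{Z}(-X) \to \mc{O}_{Z} \to f_{*}\mc{O}_{X} \to 0,
\end{equation*}
which exists precisely because $X\hookrightarrow Z$ is an effective Cartier divisor. Since $f$ is a closed immersion (in particular affine), $Rf_{*}=f_{*}$ with no higher cohomology, so this is a length-two locally free resolution of $Rf_{*}\mc{O}_{X}$, and $Rf_{*}$ is fully faithful on $D^{+}_{qcoh}$.

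First I would establish the formula $Lf^{!}(\mc{F})\simeq Lf^{*}(\mc{F}\otimes^{L}\mc{O}(X))[-1]$. By full faithfulness of $Rf_{*}$, it suffices to identify $Rf_{*}Lf^{!}(\mc{F})$ with the usual expression $R\mathcal{H}om_{\mc{O}_{Z}}(Rf_{*}\mc{O}_{X},\mc{F})$ for the right adjoint. Applying $R\mathcal{H}om_{\mc{O}_{Z}}(-,\mc{F})$ to the Koszul-type resolution above yields a distinguished triangle
\begin{equation*}
R\mathcal{H}om_{\mc{O}_{Z}}(Rf_{*}\mc{O}_{X},\mc{F}) \to \mc{F} \to \mc{F}\otimes^{L}\mc{O}(X).
\end{equation*}
On the other hand, tensoring the original short exact sequence with $\mc{F}\otimes^{L}\mc{O}(X)$ and invoking the projection formula identifies the cofiber of $\mc{F}\to \mc{F}\otimes^{L}\mc{O}(X)$ with $Rf_{*}Lf^{*}(\mc{F}\otimes^{L}\mc{O}(X))$. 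Comparing the two triangles (and shifting) yields
\begin{equation*}
R\mathcal{H}om_{\mc{O}_{Z}}(Rf_{*}\mc{O}_{X},\mc{F}) \simeq Rf_{*}Lf^{*}(\mc{F}\otimes^{L}\mc{O}(X))[-1],
\end{equation*}
whence the adjunction formula follows.

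For the canonical triangle $Lf^{*}Rf_{*}\to id \to Lf^{!}Rf_{*}$, I would apply the preceding analysis with $\mc{F}$ replaced by $Rf_{*}\mc{G}$ for $\mc{G}\in D^{+}_{qcoh}(X)$. The counit of $Lf^{*}\dashv Rf_{*}$ provides $Lf^{*}Rf_{*}\mc{G}\to \mc{G}$, and the unit of $Rf_{*}\dashv Lf^{!}$ provides $\mc{G}\to Lf^{!}Rf_{*}\mc{G}$; one then checks that these fit into the distinguished triangle obtained by applying $Lf^{*}$ to the triangle $Rf_{*}\mc{G}(-X)\to Rf_{*}\mc{G}\to Rf_{*}\mc{G}\otimes^{L}f_{*}\mc{O}_{X}$ (from tensoring the Koszul resolution with $Rf_{*}\mc{G}$) and unwinding via the explicit formula for $Lf^{!}$. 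The main obstacle is verifying that all identifications are compatible and functorial at the derived level on algebraic stacks — in particular full faithfulness of $Rf_{*}$ for the closed immersion and the projection formula for affine morphisms of stacks — but these are routine once the Stacks Project machinery is in place, which is why the statement is cited rather than reproved.
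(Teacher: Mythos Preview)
The paper does not give its own proof of this lemma: it is stated with a bare citation to the Stacks Project and then used as a black box. So there is nothing to compare your argument against beyond the cited reference itself.

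Your sketch is the standard argument and is essentially what the cited tag unwinds to: the two-term locally free resolution $\mc{O}_{Z}(-X)\to\mc{O}_{Z}$ of $f_{*}\mc{O}_{X}$ lets one compute both $R\mathcal{H}om_{\mc{O}_{Z}}(f_{*}\mc{O}_{X},-)$ and $f_{*}\mc{O}_{X}\otimes^{L}-$ explicitly, and the projection formula converts the latter into $Rf_{*}Lf^{*}$. One small point worth tightening: when you ``compare the two triangles'' you are implicitly using that the map $\mc{F}\to\mc{F}\otimes^{L}\mc{O}(X)$ appearing in the $R\mathcal{H}om$ triangle agrees (up to canonical identification) with the one coming from tensoring the Koszul sequence; this is immediate since both are induced by the inclusion $\mc{O}_{Z}(-X)\hookrightarrow\mc{O}_{Z}$, but it should be said rather than left to the reader. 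With that caveat your argument is correct, and your closing remark that ``the statement is cited rather than reproved'' is exactly what the paper does.
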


We consider the $\mb{G}_{m,R}$-equivariant morphism:
\begin{equation*}
  i_{R}:B\mb{G}_{m,R}\to \Theta_{R}, \quad R[X]\to R, \quad x\to 0.
\end{equation*}
and the following commutative diagram
\begin{equation}
  \label{cd1}
  \begin{tikzcd}
    B\mb{G}_{m,R}\ar{d}{Bt^{l}_{R}}\ar{r}{i_{R}} & \Theta_{R} \ar{d}{\theta_{R}^{l}} \\
    B\mb{G}_{m,R}\ar{r}{i_{R}} & \Theta_{R}
  \end{tikzcd}
\end{equation}
and denote the following derived functors:
\begin{equation*}
  \triangleright_{m,R}:=RBi_{R*}(-\otimes_{R} \mc{L}_{R}^{m})LBt_{R}^{l*}, \quad \triangleleft_{m,R}:=RBt_{R*}^{l}(-\otimes_{R} \mc{L}_{R}^{m})Li_{R}^{*}.
\end{equation*}
  If $l>1$, by \cref{ex1} and \cref{ex3}, we have
  \begin{equation}
    \label{eq1}
    R\theta_{R*}^{l}\mc{O}_{\Theta_{l,R}}\cong \mc{O}_{\Theta_{l,R}},\quad  RBt^{l}_{R*}\mc{L}_{R}^{i}\cong  \mc{L}_{R}^{i/l}.
  \end{equation}
\begin{lemma}
  \label{key}
  The derived functor $\triangleleft_{i+1,R}[-1]$ is the right adjoint functor of $\triangleright_{i,R}$ and $R\theta_{R*}^{l}$ is the right-adjoint functor of $L\theta_{R}^{l*}$. Moreover, we have
  \begin{equation}
    \label{eq2}
    R\theta_{R*}^{l}L\theta_{R}^{l*}\cong id. 
  \end{equation}
  \begin{equation}
    \label{eq3}
    \triangleleft_{-j,R}\circ \triangleright_{i,R}\cong -\otimes (\mc{L}_{R}^{(i-j)/l}\oplus \mc{L}_{R}^{(i-j+1)/l}[-1])
\end{equation}
\begin{equation}
  \label{eq4}
  R\theta_{R*}^{l}\circ\triangleright_{i,R}\cong Ri_{R*}\circ (-\otimes\mc(L)_{R}^{i/l}), \quad   \triangleleft_{i,R}\circ L\theta_{R}^{l*}\cong (-\otimes\mc{L}_{R}^{i/l})Li_{R}^{*}.
\end{equation}
\begin{equation}
  \triangleright_{i+l,R}\cong \triangleright_{i,R}\circ (-\otimes_{R}\mc{L}_{R}), \quad \triangleright_{i+l,R}\cong (-\otimes_{R}\mc{L}_{R})\circ \triangleright_{i,R}.
\end{equation}
\end{lemma}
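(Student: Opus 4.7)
The plan is to reduce each of the five assertions to a composition of a small set of ingredients: the standard pullback--pushforward adjunctions, the flat base change theorem (applicable because $\theta_R^l$ and $Bt_R^l$ are flat), the projection formula, the explicit pushforward $RBt_{R*}^l\mc{L}_R^i\cong\mc{L}_R^{i/l}$ from \cref{eq1}, and the Grothendieck duality for the effective Cartier divisor $i_R$ provided by \cref{lem3}. The unifying observation is that everything in sight is built from line bundles on $B\mb{G}_{m,R}$ and $\Theta_R$, which transform transparently: $LBt_R^{l*}\mc{L}_R\cong\mc{L}_R^l$, and $\mc{O}_{\Theta_R}(B\mb{G}_{m,R})|_{B\mb{G}_{m,R}}$ is an explicit power of $\mc{L}_R$ determined by the weight of the defining coordinate $x$.

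For the adjunction statements, $L\theta_R^{l*}\dashv R\theta_{R*}^l$ and $LBt_R^{l*}\dashv RBt_{R*}^l$ are the standard adjunctions for quasi-compact quasi-separated morphisms of algebraic stacks, and \cref{lem3} supplies $Ri_{R*}\dashv Li_R^!$ with $Li_R^!(-)\cong Li_R^*(-\otimes\mc{O}_{\Theta_R}(B\mb{G}_{m,R}))[-1]$. Composing the right adjoints of the three factors of $\triangleright_{i,R}=Ri_{R*}\circ(-\otimes\mc{L}_R^i)\circ LBt_R^{l*}$ in reverse order and commuting the line bundle twists through $Li_R^*$ identifies the result with $\triangleleft_{i+1,R}[-1]$. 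The identity \cref{eq2} follows from the projection formula (valid because $\theta_R^l$ is universally good) together with $R\theta_{R*}^l\mc{O}_{\Theta_R}\cong\mc{O}_{\Theta_R}$. For the base-change identities \cref{eq4}, the equality $\theta_R^l\circ i_R=i_R\circ Bt_R^l$ of \cref{cd1} gives $R\theta_{R*}^lRi_{R*}\cong Ri_{R*}RBt_{R*}^l$, while flat base change gives the dual $Li_R^*L\theta_R^{l*}\cong LBt_R^{l*}Li_R^*$; combining these with the projection formula to commute the twist past $LBt_R^{l*}$ and with \cref{eq1} to evaluate $RBt_{R*}^l\mc{L}_R^i$, the two stated isomorphisms follow.

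The self-intersection identity \cref{eq3} is the main computational step. The Koszul resolution $[\mc{O}_{\Theta_R}(-B\mb{G}_{m,R})\to\mc{O}_{\Theta_R}]$ of $i_{R*}\mc{O}_{B\mb{G}_{m,R}}$, pulled back by $Li_R^*$, becomes a two-term complex with vanishing differential (the defining equation $x$ restricts to zero on the divisor), giving the splitting $Li_R^*Ri_{R*}G\cong G\oplus(G\otimes\mc{L}_R)[\pm1]$. Plugging this into $\triangleleft_{-j,R}\triangleright_{i,R}F$, the line-bundle twists from the two functors combine with the two summands to produce $\mc{L}_R^{i-j}$ and $\mc{L}_R^{i-j+1}$ respectively, and a final application of $RBt_{R*}^l$ with \cref{eq1} yields the stated direct sum. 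The $l$-periodicity is immediate from $\mc{L}_R^{i+l}\cong\mc{L}_R^i\otimes LBt_R^{l*}\mc{L}_R$ combined with the projection formula. The main obstacle will be tracking signs, shifts, and indices consistently, most critically identifying $\mc{O}_{\Theta_R}(B\mb{G}_{m,R})|_{B\mb{G}_{m,R}}$ with the correctly signed power of $\mc{L}_R$ and pinning down the direction of the cohomological shift in the Koszul decomposition; once these conventions are fixed, each of the five claims is a formal consequence of the calculus above.
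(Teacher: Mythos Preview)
Your proposal is correct and follows essentially the same path as the paper's own proof, which consists of the single sentence that \cref{eq2}--\cref{eq5} follow from the projection formula, \cref{lem3}, and \cref{eq1}; you have simply unpacked how those three ingredients combine. One minor correction: the square \cref{cd1} is \emph{commutative but not Cartesian} (the actual fibre product $\Theta_R\times_{\theta_R^l,\Theta_R,i_R}B\mb{G}_{m,R}$ is $[\mathrm{Spec}(R[x]/(x^l))/\mb{G}_{m,R}]$), so the identity $Li_R^*L\theta_R^{l*}\cong LBt_R^{l*}Li_R^*$ that you need is not ``flat base change'' but merely functoriality of derived pullback along a commutative square, exactly as you already (correctly) argued for the pushforward side; this does not affect your argument.
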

\begin{proof}
  \label{eq5}
  The equation \cref{eq1} follows from \cref{ex1} and \cref{ex3}. The equations \cref{eq2}, \cref{eq3},\cref{eq4},\cref{eq5} follows from the projection formula, \cref{lem3} and \cref{eq1}.
\end{proof}

\begin{theorem}
  \label{thm1}
  There are functors:
    \begin{equation*}
      \overline{\tau_{m,n,R}}:D^{+}_{qcoh}(\Theta)\to D^{+}_{qcoh}(\Theta), \quad 0\leq n\leq m\leq l-1
    \end{equation*}

  which maps complexes with bounded cohomologies (resp. coherent cohomologies) to complexes with bounded cohomologies (resp. coherent cohomologies) and satisfies the following properties:
  \begin{enumerate}
  \item we have
    \begin{equation*}
    \overline{\tau_{n,n,R}}\cong \otimes \mc{L}_{\mb{A}_{R}^{1}}^{n},\quad \overline{\tau_{0,l-1,R}}\cong L\theta_{R}^{l*}R\theta_{R*}^{l}
    \end{equation*}
  \item for any $0\leq n<m \leq l-1$, we have canonical triangles:
    \begin{align*}
      \overline{\tau_{n,m-1,R}}\to \overline{\tau_{n,m,R}}\to \bigoplus_{i=0}^{m} \triangleright_{m-i,R} \triangleleft_{i,R}\\
      \overline{\tau_{n,m,R}}\to \overline{\tau_{n+1,m,R}}\to \bigoplus_{i=n+1}^{l-1} \triangleright_{i-1-n,R}\triangleleft_{i,R}.
    \end{align*}.
      \item All those functors map complexes with bounded cohomologies (resp. coherent cohomologies) to complexes with bounded cohomologies (resp. coherent cohomologies).
  \end{enumerate}

\end{theorem}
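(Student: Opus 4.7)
The plan is to realize every $\overline{\tau_{m,n,R}}$ as a Fourier--Mukai functor whose kernel is the base change to $R$ of the coherent sheaf $\tau_{m,n}$ constructed explicitly in \cref{ext3}. Since \cref{ext3} identifies $[\alpha_l/(\mb{G}_m\times\mu_l)]$ with the fiber product $\Theta\times_{\theta^l,\Theta,\theta^l}\Theta$, and the equivariant projections $\alpha_{l,1},\alpha_{l,2}$ with its two structural maps, I simply set
\[
\overline{\tau_{m,n,R}}(F)\;:=\;R\alpha_{l,2*}\bigl(L\alpha_{l,1}^{*}F\otimes^{L}\tau_{m,n,R}\bigr).
\]
Flatness of $\theta^l_R$ makes both $\alpha_{l,1}$ and $\alpha_{l,2}$ flat and universally good, so $L\alpha_{l,1}^{*}$ is exact and $R\alpha_{l,2*}$ preserves boundedness and coherence by \cref{lem2} and the derived flat base-change theorem recalled in \cref{der}; together with coherence of the kernel this already yields statement (3).

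The two special cases in (1) I handle by the projection formula. First, by \cref{ext3} the kernel $\tau_{n,n}$ is the pushforward of $x^n\mb{Z}[x,t]/(t^l-1)$ along the diagonal $\Delta_{\theta^l}\colon\Theta_R\to\Theta_R\times_{\theta^l_R}\Theta_R$; since both $\alpha_{l,1}\circ\Delta_{\theta^l}$ and $\alpha_{l,2}\circ\Delta_{\theta^l}$ are the identity of $\Theta_R$, the projection formula collapses the Fourier--Mukai integral to tensor with the line bundle $\mc{L}_{\mb{A}^1_R}^{n}$. Second, $\tau_{0,l-1}$ is (again by \cref{ext3}) the structure sheaf of the fiber product, so $\overline{\tau_{0,l-1,R}}=R\alpha_{l,2*}L\alpha_{l,1}^{*}$, and derived flat base change along the Cartesian square defining that fiber product rewrites this as $L\theta^{l*}_{R}R\theta^l_{R*}$.

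To prove (2) I feed the two short exact sequences of kernels produced in \cref{ext3},
\[
0\to\tau_{n,m-1}\to\tau_{n,m}\to\bigoplus_{i=0}^{m}\mb{Z}\langle m,i\rangle\to 0,\qquad 0\to\tau_{n,m}\to\tau_{n+1,m}\to\bigoplus_{i=n+1}^{l-1}\mb{Z}\langle n+1,i\rangle\to 0,
\]
through the triangulated Fourier--Mukai construction; exactness of $L\alpha_{l,1}^{*}$ combined with triangulatedness of $R\alpha_{l,2*}$ immediately turn them into triangles of functors of the asserted shape, provided the skyscraper Fourier--Mukai functors $\overline{\mb{Z}\langle a,b\rangle}$ are identified with the appropriate compositions $\triangleright_{\cdot,R}\triangleleft_{\cdot,R}$. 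Letting $j\colon B(\mb{G}_m\times\mu_l)_R\hookrightarrow[\alpha_l/(\mb{G}_m\times\mu_l)]_R$ denote the inclusion of the origin, the group-theoretic factorizations
\[
\alpha_{l,1}\circ j\,=\,i_R\circ B\sigma_1,\qquad \alpha_{l,2}\circ j\,=\,i_R\circ B\sigma_2,
\]
with $\sigma_1(t,\mu)=t$ and $\sigma_2(t,\mu)=t\mu$, together with the projection formula, rewrite $\overline{\mb{Z}\langle a,b\rangle}(F)$ as
\[
Ri_{R*}\,RB\sigma_{2*}\bigl(LB\sigma_1^{*}Li_R^{*}F\otimes\mb{Z}\langle a,b\rangle\bigr).
\]
A direct weight computation using \cref{ex1,ex3,exp8}, in which $RB\sigma_{2*}$ is evaluated as invariants under the antidiagonal $\mu_l\subset\mb{G}_m\times\mu_l$ (the kernel of $\sigma_2$), identifies this expression with a composition of the form $\triangleright_{b',R}\triangleleft_{a'-b',R}$ from \cref{key}; summing over the two index sets, and where necessary adjusting representatives via the periodicity $\triangleright_{i+l,R}\cong\triangleright_{i,R}\circ(-\otimes\mc{L}_R)$ of \cref{key}, reproduces the two distinguished triangles stated in the theorem.

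The principal technical burden will be that last skyscraper identification: the weight bookkeeping through the $\mu_l$-gerbe $B\sigma_2$ is easy to mis-sign, and the indexing conventions used for the two summations in \cref{ext3} match those of the theorem only after periodicity adjustment from \cref{key}. Everything else is formal, since the cohomological preservation in (3) is inherited from the behavior of the flat pullback $L\alpha_{l,1}^{*}$ and the universally good pushforward $R\alpha_{l,2*}$ via \cref{lem2} and the discussion in \cref{der}.
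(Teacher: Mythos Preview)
Your approach is essentially identical to the paper's: you define $\overline{\tau_{m,n,R}}$ as the Fourier--Mukai transform with kernel $\tau_{m,n}\otimes R$ along the two projections $\alpha_{1,R},\alpha_{2,R}$ of $[\alpha_{l}/\mb{G}_m\times\mu_l]_R\cong\Theta_R\times_{\theta^l_R}\Theta_R$, read off (1) from the projection formula and flat base change, and obtain the triangles in (2) from the short exact sequences of kernels in \cref{ext3} together with the skyscraper identification $\overline{R\langle m,n\rangle}\cong\triangleright_{n-m,R}\triangleleft_{m,R}$. The paper asserts that last identification in one line whereas you sketch it via the factorizations $\alpha_{l,k}\circ j=i_R\circ B\sigma_k$, but this is the same argument with slightly more detail, and your caveat about the weight bookkeeping is well placed given the index conventions in \cref{ext3}.
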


\begin{proof}
  We consider the equivariant morphisms in \cref{ext3}
  \begin{align*}
    \alpha_{1}:(\mb{G}^{m}\times \mu_{l},\alpha_{l})\to (\mb{G}_{m},\mb{A}^{1}),  & \quad  ((t,\mu),(x,y))\to (t,x),\\
    \alpha_{2}:(\mb{G}^{m}\times \mu_{l},\alpha_{l})\to (\mb{G}_{m},\mb{A}^{1}), &  \quad  ((t,\mu),(x,y))\to (t\mu,y).
  \end{align*}
  which induces morphisms of quotient stacks from $[\alpha_{l}/\mb{G}_{m}\times \mu_{l}]$ to $\Theta$ and we abuse the notation to still denote them as  $\alpha_{1},\alpha_{2}$ respectively.  Then we have the Cartesian diagram of affine quotient stacks:
  \begin{equation*}
    \begin{tikzcd}
      \left[\alpha_{l}/\mb{G}^{m}\times \mu_{l}\right]\ar{r}{\alpha_{1}} \ar{d}{\alpha_{2}}& \Theta \ar{d}{\theta^{l}} \\
      \Theta \ar{r}{\theta^{l}} & \Theta.
    \end{tikzcd}
  \end{equation*}
  Such that the morphisms $\alpha_{1}$ and $\alpha_{2}$ are both flat, universally cohomologically affine and universally cohomologically proper. The diagonal $\Delta_{\theta_{l}}:\Theta\to \alpha_{l}$ is represented by $\Delta_{t^{l},x^{l}}$ in \cref{ext3} and \cref{ext4}. We denote $\alpha_{1,R}:=\alpha_{1}\otimes R$  and $\alpha_{2,R}:=\alpha_{2}\otimes R$.

  For
  $$x\in D^{b}_{coh}([\alpha_{l,R}/\mb{G}^{m}_{R}\times \mu_{l,R}]),$$
  we consider the Fourier-Mukai transform:
  \begin{equation*}
   \bar{x}: R_{\alpha_{2,R}*}(x\otimes^{L}L\alpha_{1,R}^{*}(-)):D^{+}_{qcoh}(\Theta_{R})\to D^{+}_{qcoh}(\Theta_{R}).
 \end{equation*}
 which maps complexes with bounded or coherent cohomologies to complexes with bounded or coherent cohomologies.

 Now we define $\tau_{m,n,R}:=\tau_{m,n}\otimes R$, where $\tau_{m,n}$ is defined in \cref{ext4}. By the flat base change theorem, we have $\overline{\tau_{0,l-1,R}}\cong L\theta^{l*}R\theta^{l}_{*}$ and
  \begin{equation*}
    R_{\alpha_{2,R}*}(R<m,n>\otimes^{L}L\alpha_{1,R}^{*}(-))\cong \triangleright_{n-m,R}\triangleleft_{m,R}.
  \end{equation*}
  Thus \cref{thm1} follows from \cref{ext4}.
\end{proof}

\begin{corollary}
  If $l>1$, the following functors are fully faithful:
  \begin{align*}
    L\theta^{*}_{l}:D_{qcoh}^{+}(\Theta_{R})\to D_{qcoh}^{+}(\Theta_{R}), \quad \triangleright_{i,R}:D_{qcoh}^{+}(B\mb{G}_{m,R})\to D_{qcoh}^{+}(\Theta_{R}).
  \end{align*}
  Moreover, we denote $D_{l,R}^{i}:=\triangleright_{i,R}D_{qcoh}^{+}(B\mb{G}_{m,R})$. Then
  \begin{equation*}
    D_{l,R}^{i}\cong D_{l,R}^{i+l}
  \end{equation*}
and  for any $0\leq i\leq l-1$, we have the semi-orthogonal decomposition
\begin{equation}
  \label{eq6}
    <D_{l,R}^{i-l+1},\cdots, D_{l,R}^{-1}, L\theta^{*}_{l}D^{+}_{qcoh}(\Theta_{R}), D_{l,R}^{0},\cdots, D_{l,R}^{i-1}>
  \end{equation}
  which is a fully subcategory of $D^{+}_{qcoh}(\Theta)$. Moreover, similar arguments also hold for for derived category of complexes with bounded or coherent cohomologies.
\end{corollary}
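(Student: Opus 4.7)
The plan is to verify the four assertions of the corollary in turn, all of which follow from the adjunction identities \cref{eq2}--\cref{eq5} of \cref{key} together with the Fourier--Mukai filtration furnished by \cref{thm1}.

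Full faithfulness of $L\theta_R^{l*}$ is immediate from \cref{eq2}, since $R\theta_{R*}^lL\theta_R^{l*}\cong\mathrm{id}$ is exactly the statement that the unit of the adjunction $L\theta_R^{l*}\dashv R\theta_{R*}^l$ is an isomorphism. For $\triangleright_{i,R}$, I would first identify its right adjoint by dualising $Ri_{R*}\circ(-\otimes\mc{L}_R^i)\circ LBt_R^{l*}$ term by term, invoking \cref{lem3} for the right adjoint of $Ri_{R*}$; the answer is the shifted functor $\triangleleft_{i+1,R}[-1]$ of \cref{key}. Composing via \cref{eq3} and using $l>1$, every $\mc{L}_R^{k/l}$-summand with $k\not\equiv 0\pmod l$ vanishes, leaving the $\mc{L}_R^0=R$ summand which, after the $[-1]$ shift, is $\mathrm{id}$. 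Periodicity $D_{l,R}^i\cong D_{l,R}^{i+l}$ is a direct consequence of \cref{eq5}: since $-\otimes_R\mc{L}_R$ is an autoequivalence of $D_{qcoh}^+(B\mb{G}_{m,R})$ and $\triangleright_{i+l,R}\cong\triangleright_{i,R}\circ(-\otimes_R\mc{L}_R)$, the essential images coincide.

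Semi-orthogonality reduces to three families of $\mathrm{Hom}$-vanishings, each obtained by passing to adjoints and applying \cref{eq3}--\cref{eq4}: (i) for $a<b$ both in $\{i-l+1,\ldots,i-1\}$, $\mathrm{Hom}(\triangleright_{b,R}F,\triangleright_{a,R}F')$ reduces via \cref{eq3} to a $\mathrm{Hom}$ into a direct sum of $\mc{L}_R^{k/l}$-twists with $k\in\{a-b,a-b-1\}$, and the ranges $a-b\in\{-(l-2),\ldots,-1\}$, $a-b-1\in\{-(l-1),\ldots,-2\}$ avoid multiples of $l$; (ii) for $a\in\{0,\ldots,i-1\}$, $\mathrm{Hom}(\triangleright_{a,R}F,L\theta_R^{l*}G)$ reduces via \cref{eq4} to vanishing of $\mc{L}_R^{(a+1)/l}$, which holds since $a+1\in\{1,\ldots,l-1\}$; (iii) for $b\in\{i-l+1,\ldots,-1\}$, $\mathrm{Hom}(L\theta_R^{l*}G,\triangleright_{b,R}F)$ reduces via \cref{eq4} to vanishing of $\mc{L}_R^{b/l}$, which holds since $b\in\{-(l-1),\ldots,-1\}$. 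The hypothesis $0\le i\le l-1$ and $l>1$ are precisely what make each of these integer ranges avoid multiples of $l$.

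Generation of $D_{qcoh}^+(\Theta_R)$ by the listed pieces follows from the Fourier--Mukai filtration of \cref{thm1}: applied to any $E$, iteration of the first triangle yields a tower $E=\overline{\tau_{0,0,R}}(E)\to\cdots\to\overline{\tau_{0,l-1,R}}(E)=L\theta_R^{l*}R\theta_{R*}^lE$ whose successive cofibres lie in $\bigoplus_{k=0}^{l-1}D_{l,R}^k$, placing $E$ in the triangulated subcategory $\langle L\theta_R^{l*}D_{qcoh}^+(\Theta_R),D_{l,R}^0,\ldots,D_{l,R}^{l-1}\rangle$; the periodicity $D_{l,R}^{k+l}\cong D_{l,R}^k$ then identifies this with the listed SOD-subcategory up to the redundant piece $D_{l,R}^i$, which can be eliminated by running instead the second-triangle filtration of \cref{thm1} based at $\overline{\tau_{i,i,R}}(E)=E\otimes\mc{L}_R^i$ and invoking the autoequivalence $-\otimes\mc{L}_R^{-i}$. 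The principal obstacle will be keeping the index conventions in \cref{key} and \cref{thm1} consistent throughout; once this bookkeeping is in place, the bounded and coherent-cohomology refinements are immediate because every functor in sight ($Ri_{R*}$, $LBt_R^{l*}$, $RBt_{R*}^l$, $L\theta_R^{l*}$, $R\theta_{R*}^l$, and the tensor products with $\mc{L}_R$-powers) preserves these conditions by \cref{ex1}, \cref{ex3} and the universally-good hypothesis on $\theta_R^l$.
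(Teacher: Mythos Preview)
Your proposal is correct and matches the paper's intended approach: the Corollary is stated there without proof, as an immediate consequence of \cref{key} and \cref{thm1}, and you have correctly unpacked how each ingredient is used---\cref{eq2} for full faithfulness of $L\theta_R^{l*}$, \cref{eq3} together with the adjunction in \cref{key} for full faithfulness of $\triangleright_{i,R}$, \cref{eq5} for periodicity, \cref{eq3}--\cref{eq4} for semi-orthogonality, and the filtration of \cref{thm1} for generation.

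The one place your argument is a bit loose is the elimination of the ``redundant piece'' $D_{l,R}^{i}$ in the generation step. Your proposed workaround of twisting by $\mc{L}^{-i}$ does not quite close the gap as written, because $-\otimes\mc{L}_{\mb{A}_R^1}$ does not preserve the essential image of $L\theta_R^{l*}$. A cleaner route: run only the \emph{second}-triangle filtration of \cref{thm1} with $m=l-1$, which connects $L\theta_R^{l*}R\theta_{R*}^{l}E=\overline{\tau_{0,l-1,R}}(E)$ to $\overline{\tau_{l-1,l-1,R}}(E)=E\otimes\mc{L}_{\mb{A}_R^1}^{l-1}$ with all cofibers in $D_{l,R}^{0},\ldots,D_{l,R}^{l-2}$; since this holds for \emph{every} $E$, substituting $E\otimes\mc{L}_{\mb{A}_R^1}^{1-l}$ for $E$ yields generation directly for the case $i=l-1$. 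The remaining values of $i$ then follow by a mutation argument using the adjunctions already established, or by an analogous combination of the two filtrations in \cref{thm1}.
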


\subsection{The semi-orthogonal decomposition for root stacks}
Now we give a proof for the semi-orthogonal decomposition of the derived category of (below)-bounded (quasi)-coherent sheaves of root stacks.

Let $\mc{X}$ be an algebraic stack. Given a line bundle $\mc{L}$ over $\mc{X}$, it induces a canonical morphism from $\mc{X}\to B\mb{G}_{m}$ such that $\mc{L}$ is the pull-back of $\mc{L}_{\mb{Z}}$. Given an integer $l>1$, we denote
\begin{equation*}
  \mc{X}_{\mc{L},l}:=\mc{X}\times_{B{\mb{G}_{m}},Bt^{l}}B\mb{G}_{m}.
\end{equation*}
and denote $\mc{L}_{l}\in Pic(\mc{X}_{\mc{L},l})$ as the pull-back of $\mc{L}_{\mb{Z}}$ along the second projection $\mc{X}_{\mc{L},l}\to B\mb{G}_{m}$.

Let
\begin{equation*}
  r_{\mb{Z}}:\mc{L}_{\mb{A}_{\mb{Z}}^{1}}\to \mc{O}_{\Theta}
\end{equation*}
be the canonical on $\Theta$ which maps $1$ to $x$. Given a line bundle $\mc{L}$ with a co-section
\begin{equation*}
  r:\mc{L}\to \mc{O}_{\mc{X}}
\end{equation*}
over $\mc{X}$, it induces a canonical morphism from $\mc{X}$ to $\Theta$ such that $r$ is the pull-back of $r_{\mb{Z}}$. We define the $l$-th root stack of $r$ as
\begin{equation*}
  \mc{X}_{r,l}:=\mc{X}\times_{\Theta, \theta^{l}}\Theta.
\end{equation*}

Now we assume that $r$ is generated by an effective divisor $\mc{D}$, i.e.
\begin{equation*}
  r=(\mc{O}_{\mc{X}}(-\mc{D})\subset \mc{O}_{\mc{X}}).
\end{equation*}
 It induces a Cartesian diagram
\begin{equation*}
  \begin{tikzcd}
    \mc{D}\ar{r} \ar{d}{i_{\mc{X}}} & B\mb{G}_{m}\ar{d}{i_{\mb{Z}}}\\
    \mc{X}\ar{r}{r} & \Theta.
  \end{tikzcd}
\end{equation*}
such that the morphism from $\mc{D}$ to $B\mb{G}_{m}$ is induced by the line bundle $\mc{L}_{\mc{D}}:=\mc{O}_{\mc{X}}(\mc{-D})|_{\mc{D}}$. We denote
\begin{equation*}
  \mc{X}_{\mc{D},l}:=\mc{X}_{r,l}, \quad \mc{D}_{l}:=\mc{D}_{\mc{L}_{\mc{D}},l}
\end{equation*}
 and consider the pull-back of \cref{cd1} for $R=\mb{Z}$ along $r$, and have the following Cartesian diagram
\begin{equation*}
  \begin{tikzcd}
    \mc{D}_{l}\ar{r}{i_{\mc{X}_{\mc{D},l}}} \ar{d}{Bt_{\mc{D}}^{l}} & \mc{X}_{\mc{D},l}\ar{d}{\theta_{\mc{X}}^{l}}\\
    \mc{D}\ar{r}{i_{\mc{X}}} & \mc{X}.
  \end{tikzcd}
\end{equation*}
We notice that $Bt^{l}_{\mc{D}}$ and $\theta_{\mc{X}}^{l}$ are all universally good morphisms by the base change. For any integer $m$, we define the following functors
\begin{align*}
  \triangleleft_{m,\mc{X}}:=RBt_{\mc{D}*}^{l}\circ (-\otimes \mc{L}_{\mc{D},l}^{m}) \circ Li_{\mc{X}_{\mc{D},l}}^{*}:D^{+}_{qcoh}(\mc{X}_{\mc{D},l})\to D_{qcoh}^{+}(\mc{D}),\\
  \triangleright_{m,\mc{X}}:=Ri_{\mc{X}_{\mc{D},l}*}\circ (-\otimes \mc{L}_{D,l}^{m})\circ LBt_{\mc{D}}^{l*}:D^{+}_{qcoh}(\mc{D})\to D_{qcoh}^{+}(\mc{X}_{\mc{D},l}).
\end{align*}

By the fppf descent of quasi-coherent sheaves, \cref{lem3} and \cref{key}, we have the following lemmas
\begin{lemma}
  \label{main1}
  If $l>1$, we have
  \begin{equation}
    \label{eq11}
    R\theta_{\mc{X}*}^{l}\mc{O}_{\mc{X}_{\mc{D},l}}\cong \mc{O}_{\mc{X}},\quad  RBt^{l}_{\mc{D}*}(\mc{L}_{\mc{D},l}^{i})\cong \mc{L}_{\mc{D}}^{i/l}
  \end{equation}
    The derived functor $\triangleleft_{i+1,\mc{X}}[-1]$ is the right adjoint functor of $\triangleright_{i,\mc{X}}$ and $R\theta_{\mc{X}*}^{l}$ is the right-adjoint functor of $L\theta_{\mc{X}}^{l*}$. Moreover, we have
  \begin{equation}
    \label{eq12}
    R\theta_{\mc{X}*}^{l}L\theta_{\mc{X}}^{l*}\cong id. 
  \end{equation}
  \begin{equation}
    \label{eq13}
    \triangleleft_{-j,\mc{X}}\circ \triangleright_{i,\mc{X}}\cong -\otimes (\mc{L}_{\mc{D}}^{(i-j)/l}\oplus \mc{L}_{\mc{D}}^{(i-j+1)/l}[-1])
\end{equation}
\begin{equation}
  \label{eq14}
  R\theta_{\mc{X}*}^{l}\circ\triangleright_{i,\mc{X}}\cong Ri_{\mc{X}*}\circ (-\otimes\mc{L}_{\mc{D}}^{i/l}), \quad   \triangleleft_{i,\mc{X}}\circ L\theta_{\mc{X}}^{l*}\cong (-\otimes\mc{L}_{\mc{D}}^{i/l})Li_{\mc{X}}^{*}.
\end{equation}
\begin{equation}
  \label{eq15}
  \triangleright_{i+l,\mc{X}}\cong \triangleright_{i,\mc{X}}\circ (-\otimes\mc{L}_{\mc{D}}), \quad \triangleright_{i+l,\mc{X}}\cong (-\otimes\mc{L}_{\mc{D}})\circ \triangleright_{i,\mc{X}}.
\end{equation}
\end{lemma}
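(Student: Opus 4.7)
The proof proceeds by base change from the universal case over $\Theta$ (with $R = \mb{Z}$) handled in \cref{key}, using that quasi-coherent pushforward along flat morphisms commutes with arbitrary pullback. The starting observation is that the commutative square appearing just before the definitions of $\triangleright_{m,\mc{X}}$ and $\triangleleft_{m,\mc{X}}$ is the pullback of \cref{cd1} (with $R=\mb{Z}$) along the map $r:\mc{X}\to\Theta$ induced by the cosection $\mc{O}_{\mc{X}}(-\mc{D})\hookrightarrow\mc{O}_{\mc{X}}$. In particular, $\theta_{\mc{X}}^{l}$ and $Bt_{\mc{D}}^{l}$ are the base changes of $\theta^{l}$ and $Bt^{l}$, so they are flat and universally good, and the line bundles $\mc{L}_{\mc{D}}$, $\mc{L}_{\mc{D},l}$ are the pullbacks of $\mc{L}_{\mb{Z}}$ along the appropriate maps to $B\mb{G}_m$.

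For \cref{eq11}, I would apply the flat base change theorem of \cref{der} to the Cartesian square defining $\mc{X}_{\mc{D},l}$ and combine with \cref{eq1}: this gives $R\theta_{\mc{X}*}^{l}\mc{O}_{\mc{X}_{\mc{D},l}}\cong r^{*}R\theta_{*}^{l}\mc{O}_{\Theta}\cong r^{*}\mc{O}_{\Theta}=\mc{O}_{\mc{X}}$, and an identical argument applied to the $B\mb{G}_m$-square yields $RBt_{\mc{D}*}^{l}(\mc{L}_{\mc{D},l}^{i})\cong\mc{L}_{\mc{D}}^{i/l}$. The adjunction $L\theta_{\mc{X}}^{l*}\dashv R\theta_{\mc{X}*}^{l}$ holds because $\theta_{\mc{X}}^{l}$ is quasi-compact and quasi-separated (\cref{der}). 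For $\triangleright_{i,\mc{X}}\dashv\triangleleft_{i+1,\mc{X}}[-1]$, I would compose the adjunction $LBt_{\mc{D}}^{l*}\dashv RBt_{\mc{D}*}^{l}$ (conjugated by the invertible twist $\mc{L}_{\mc{D},l}^{i}$) with \cref{lem3} applied to the effective Cartier divisor $i_{\mc{X}_{\mc{D},l}}:\mc{D}_{l}\hookrightarrow\mc{X}_{\mc{D},l}$, after identifying $\mc{O}_{\mc{X}_{\mc{D},l}}(\mc{D}_{l})|_{\mc{D}_{l}}\cong\mc{L}_{\mc{D},l}^{-1}$, which is itself a base change of the analogous identification over $\Theta$.

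Finally, each of \cref{eq12}, \cref{eq13}, \cref{eq14}, \cref{eq15} reduces to the corresponding identity in \cref{key} by the same mechanism: every functor appearing in these equations is a composition of $L$-pullback, $R$-pushforward, and tensor with $\mc{L}_{\mc{D}}^{j}$ or $\mc{L}_{\mc{D},l}^{j}$, and each such composition is the base change along $r$ (or along $i_{\mc{X}}$) of its universal analogue. Flat base change then transports the equality from $\Theta_{\mb{Z}}$ to $\mc{X}$. Alternatively, all four identities can be derived directly from \cref{eq11}, the projection formula along $i_{\mc{X}_{\mc{D},l}}$ and $\theta_{\mc{X}}^{l}$, and the adjunctions just established; for instance, the periodicity in \cref{eq15} follows from $\mc{L}_{\mc{D},l}^{l}\cong\theta_{\mc{X}}^{l*}\mc{O}_{\mc{X}}(-\mc{D})|_{\mc{D}_{l}}$ combined with the projection formula.

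The only real obstacle is bookkeeping: one must verify that each invoked Cartesian square is indeed Cartesian with the quoted morphism flat (so that derived flat base change applies), and keep the grading conventions of \cref{ex1} and \cref{ext4} consistent so that the index shifts $i/l$, $i-j$, and $i+l$ match on both sides of every equation. The actual homological content is entirely contained in \cref{key} and \cref{lem3}.
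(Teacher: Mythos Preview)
Your proposal is correct and follows essentially the same route as the paper: the paper states just before \cref{main1} that it follows ``by the fppf descent of quasi-coherent sheaves, \cref{lem3} and \cref{key}'', which is exactly your strategy of base-changing the universal identities from $\Theta$ via the Cartesian squares and invoking \cref{lem3} for the Cartier-divisor adjunction. Your write-up is in fact more explicit than the paper's one-line justification, spelling out which Cartesian square and which flatness hypothesis is used at each step.
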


\begin{theorem}
  \label{main2}
Given $0\leq n\leq m\leq n-1$, we consider
\begin{equation*}
  \tau_{n,m,\mc{X}}:=r^{*}\tau_{n,m}\in D^{b}_{coh}(\mc{X}_{\mc{D},l}\times_{\theta_{\mc{X}}^{l},\mc{X},\theta_{\mc{X}}^{l}}\mc{X}_{\mc{D},l}).
\end{equation*}
and define
\begin{equation*}
\overline{\tau}_{n,m,\mc{X}}:D^{+}_{qcoh}(\mc{X}_{\mc{D},l})\to D^{+}_{qcoh}(\mc{X}_{\mc{D},l})
\end{equation*}
as the Fourier-Mukai transform generated by the Fourier-Mukai kernels $\tau_{n,m,\mc{X}}$. Then
which maps complexes with bounded cohomologies (resp. coherent cohomologies) to complexes with bounded cohomologies (resp. coherent cohomologies) and satisfy the following properties:
\begin{enumerate}
  \item we have
    \begin{equation*}
    \overline{\tau_{n,n,\mc{X}}}\cong \otimes \mc{O}_{\mc{X}_{\mc{D},l}}(-\mc{D}_{l})^{n},\quad \overline{\tau_{0,l-1,\mc{X}}}\cong L\theta_{\mc{X}}^{l*}R\theta_{\mc{X}*}^{l}
    \end{equation*}
  \item for any $0\leq n<m \leq l-1$, we have canonical triangles:
    \begin{align*}
      \overline{\tau_{n,m-1,\mc{X}}}\to \overline{\tau_{n,m,\mc{X}}}\to \bigoplus_{i=0}^{m} \triangleright_{m-i,\mc{X}} \triangleleft_{i,\mc{X}}\\
      \overline{\tau_{n,m,\mc{X}}}\to \overline{\tau_{n+1,m,\mc{X}}}\to \bigoplus_{i=n+1}^{l-1} \triangleright_{i-1-n,\mc{X}}\triangleleft_{i,\mc{X}}.
    \end{align*}.
      \item All those functors map complexes with bounded cohomologies (resp. coherent cohomologies) to complexes with bounded cohomologies (resp. coherent cohomologies).
  \end{enumerate}
\end{theorem}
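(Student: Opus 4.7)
The plan is to derive \cref{main2} as a formal consequence of \cref{thm1} via flat base change along $r\colon \mc{X}\to \Theta$. Since $\mc{X}_{\mc{D},l} = \mc{X}\times_{r,\Theta,\theta^l}\Theta$ by construction, the fiber product appearing in the Fourier--Mukai setup satisfies
\begin{equation*}
  \mc{X}_{\mc{D},l}\times_{\theta^l_\mc{X},\mc{X},\theta^l_\mc{X}}\mc{X}_{\mc{D},l} \;\cong\; \mc{X}\times_{r,\Theta}\bigl[\alpha_l/\mb{G}_m\times\mu_l\bigr],
\end{equation*}
and the projections $\alpha_{1,\mc{X}},\alpha_{2,\mc{X}}\colon \mc{X}_{\mc{D},l}\times_\mc{X}\mc{X}_{\mc{D},l}\to\mc{X}_{\mc{D},l}$ are base changes of the flat morphisms $\alpha_1,\alpha_2$, hence themselves flat. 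This puts us in a position to invoke the flat base change theorem for the derived category of quasi-coherent sheaves reviewed in \cref{sec3}.

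First, I would pull back the short exact sequences of kernels in $\mathrm{Coh}_{\mb{G}_m\times\mu_l}(\alpha_l)$ constructed in \cref{ext3} (which underlie the triangles of \cref{thm1}) to triangles of kernels on the fiber product $\mc{X}_{\mc{D},l}\times_\mc{X}\mc{X}_{\mc{D},l}$, where $\tau_{n,m,\mc{X}}$ by definition sits. Applying the triangulated Fourier--Mukai functor $R\alpha_{2,\mc{X}*}\bigl((-)\otimes^L L\alpha_{1,\mc{X}}^*(-)\bigr)$ then turns these into triangles of functors on $\mc{X}_{\mc{D},l}$; this would yield item (2) once the cokernel terms are identified with the prescribed $\triangleright\triangleleft$ compositions.

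The key identification is that the Fourier--Mukai transform with kernel coming from $\mb{Z}\langle a,b\rangle$, a point-supported module at the origin of $\alpha_l$ with prescribed $\mb{G}_m\times\mu_l$ bi-degree, coincides with the composition $\triangleright_{a,\mc{X}}\triangleleft_{b,\mc{X}}$. After base change, such a kernel becomes the push-forward of an appropriately twisted line bundle along $\mc{D}_l\times_\mc{D}\mc{D}_l \hookrightarrow \mc{X}_{\mc{D},l}\times_\mc{X}\mc{X}_{\mc{D},l}$, and the claim then follows from the projection formula together with flat base change applied to the Cartesian diagram relating $\mc{D}_l$, $\mc{D}$, $\mc{X}_{\mc{D},l}$, $\mc{X}$ via \cref{main1}. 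For item (1), the identification $\overline{\tau_{n,n,\mc{X}}}\cong \otimes\mc{O}_{\mc{X}_{\mc{D},l}}(-\mc{D}_l)^n$ follows from the corresponding universal statement in \cref{thm1} together with the fact that the tautological line bundle on $\Theta$ pulls back to $\mc{O}_{\mc{X}_{\mc{D},l}}(-\mc{D}_l)$; the identification $\overline{\tau_{0,l-1,\mc{X}}}\cong L\theta^{l*}_\mc{X}R\theta^l_{\mc{X}*}$ similarly descends from \cref{thm1} by applying flat base change to the Cartesian square of $\theta^l$.

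For item (3), preservation of bounded and coherent cohomology is automatic at each step: $L\alpha_{1,\mc{X}}^*$ is exact by flatness and preserves coherence, tensoring with the coherent kernel $\tau_{n,m,\mc{X}}$ is triangulated, and $R\alpha_{2,\mc{X}*}$ preserves both subcategories because $\alpha_{2,\mc{X}}$ is universally good as a base change of $\alpha_2$. The main technical obstacle is the kernel matching described above: one must verify carefully that the derived pull-back of $\mb{Z}\langle a,b\rangle$ corresponds to the Fourier--Mukai kernel of $\triangleright_{a,\mc{X}}\triangleleft_{b,\mc{X}}$ with the precise bi-grading and line bundle twists. Once this matching is established in the universal setting over $\mb{Z}$, it descends automatically because all the derived functors entering the definitions of $\triangleright$ and $\triangleleft$ commute with the relevant flat base changes.
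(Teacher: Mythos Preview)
Your proposal is correct and matches the paper's approach exactly: the paper gives no separate proof of \cref{main2} because the kernels $\tau_{n,m,\mc{X}}$ are defined as $r^{*}\tau_{n,m}$, so every assertion descends from \cref{thm1} by flat base change along $r\colon\mc{X}\to\Theta$, precisely as you outline. The only point to tighten is the indexing in your identification of the Fourier--Mukai transform of $\mb{Z}\langle a,b\rangle$: the paper's proof of \cref{thm1} gives $\overline{R\langle m,n\rangle}\cong \triangleright_{n-m,R}\triangleleft_{m,R}$ rather than $\triangleright_{a}\triangleleft_{b}$, but you already flag this bookkeeping as the step requiring care.
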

By \cref{main1} and \cref{main2}, we have the following theorem
\begin{theorem}
  \label{main3}
  If $l>1$, the following functors are fully faithful:
  \begin{align*}
    L\theta^{l*}_{\mc{X}}:D_{qcoh}^{+}(\mc{X})\to D_{qcoh}^{+}(\mc{X}_{\mc{D},l}), \\
    \triangleright_{i,\mc{X}}:D_{qcoh}^{+}(\mc{D})\to D_{qcoh}^{+}(\mc{X}_{\mc{D},l}).
  \end{align*}
  Moreover, we denote $D_{l,\mc{D}}^{i}:= \triangleright_{i,\mc{X}}D_{qcoh}^{+}(\mc{D})$. Then
  \begin{equation*}
    D_{l,\mc{D}}^{i}\cong D_{l,\mc{D}}^{i+l}
  \end{equation*}
and  for any $0\leq i\leq l-1$, we have the semi-orthogonal decomposition
\begin{equation}
  D^{+}_{qcoh}(\mc{X}_{\mc{D},l}):=<D_{l,\mc{D}}^{i-l+1},\cdots, D_{l,\mc{D}}^{-1}, L\theta^{*}_{l}D^{+}_{qcoh}(\mc{X}), D_{l,\mc{D}}^{0},\cdots, D_{l,\mc{D}}^{i-1}>.
\end{equation}
Similar arguments also hold for complexes with bounded or coherent cohomologies.
\end{theorem}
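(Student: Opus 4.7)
The plan is to deduce \cref{main3} from the adjunction identities of \cref{main1} and the Fourier-Mukai kernels constructed in \cref{main2}, by a combination of formal adjunction arguments and one application of the resolution-of-the-diagonal technique.

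\textbf{Fully faithfulness and periodicity.} Full faithfulness of $L\theta^{l*}_{\mc{X}}$ is immediate from \cref{eq12}, which says $R\theta^l_{\mc{X}*}L\theta^{l*}_{\mc{X}}\cong id$. For $\triangleright_{i,\mc{X}}$ I would compose with the right adjoint $\triangleleft_{i+1,\mc{X}}[-1]$ supplied by \cref{main1} and evaluate via \cref{eq13}: using the convention that $\mc{L}_{\mc{D}}^{a/l}=0$ whenever $a/l\notin\mb{Z}$ (coming from \cref{eq11}), all summands but one are killed, and the surviving $\mc{O}_\mc{D}$-term produces the identity after the overall shift. The periodicity $D^j_{l,\mc{D}}\cong D^{j+l}_{l,\mc{D}}$ is then immediate from \cref{eq15}, since tensoring with $\mc{L}_\mc{D}$ is an auto-equivalence of $D^+_{qcoh}(\mc{D})$.

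\textbf{Semi-orthogonality.} Three types of pairwise Hom-vanishing must be verified, each of which reduces by adjunction to the vanishing of a fractional-exponent line bundle. For $0\leq b\leq i-1$, $Hom(\triangleright_{b,\mc{X}}F, L\theta^{l*}_{\mc{X}}G)$ is controlled by \cref{eq14} and reduces to $\mc{L}_\mc{D}^{(b+1)/l}=0$ since $1\leq b+1\leq i<l$; for $i-l+1\leq a\leq -1$, the symmetric vanishing $Hom(L\theta^{l*}_{\mc{X}}G,\triangleright_{a,\mc{X}}F)$ reduces via \cref{eq14} to $\mc{L}_\mc{D}^{a/l}=0$ since $-l<a<0$; and for $a<b$ both in $\{i-l+1,\ldots,i-1\}$, \cref{eq13} expresses the relevant composition as a sum of two line bundles whose fractional exponents both lie in $(-1,0)$ and hence vanish.

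\textbf{Generation via the resolution of the diagonal.} The substantive step is to show that every $F\in D^+_{qcoh}(\mc{X}_{\mc{D},l})$ lies in the triangulated subcategory generated by the listed components. Starting from $\overline{\tau_{0,0,\mc{X}}}\cong id$ and iterating the first family of triangles from \cref{main2}, one obtains a filtration of the identity functor whose successive cones are direct sums of $\triangleright_{j,\mc{X}}\triangleleft_{i',\mc{X}}$ and whose final term is $\overline{\tau_{0,l-1,\mc{X}}}\cong L\theta^{l*}_{\mc{X}}R\theta^l_{\mc{X}*}$. Applied to $F$ this exhibits $F$ as an iterated extension of $L\theta^{l*}_{\mc{X}}R\theta^l_{\mc{X}*}F\in L\theta^{l*}_{\mc{X}}D^+_{qcoh}(\mc{X})$ by terms lying in various $D^j_{l,\mc{D}}$; combined with the semi-orthogonality above and the periodicity, this yields the SOD, and the boundedness/coherence statements are inherited from \cref{main2}(3).

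\textbf{Main obstacle.} The delicate bookkeeping point is that the cones of the first-family filtration involve $\triangleright_{j,\mc{X}}$ for every $j\in\{0,\ldots,l-1\}$, including the residue class $j\equiv i\pmod{l}$ which is excluded from the SOD list and replaced there by $L\theta^{l*}_{\mc{X}}D^+_{qcoh}(\mc{X})$. To relocate this offending component into the SOD list, I will combine the first family with the second family of triangles $\overline{\tau_{0,l-1,\mc{X}}}\to\overline{\tau_{1,l-1,\mc{X}}}\to\bigoplus_{i'=1}^{l-1}\triangleright_{i'-1,\mc{X}}\triangleleft_{i',\mc{X}}$ and its iterates down to $\overline{\tau_{l-1,l-1,\mc{X}}}\cong -\otimes\mc{O}_{\mc{X}_{\mc{D},l}}(-\mc{D}_l)^{l-1}$ through octahedral axioms, then invoke the periodicity \cref{eq15} to re-index the cones into the prescribed range $\{i-l+1,\ldots,i-1\}$. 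Once this reshuffling is arranged, the rest of the argument is purely formal.
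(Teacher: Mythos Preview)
Your approach is precisely the paper's: the paper's entire proof of \cref{main3} is the single clause ``By \cref{main1} and \cref{main2}, we have the following theorem,'' and you have faithfully unpacked this into the adjunction/vanishing computations from \cref{main1} for full faithfulness and semi-orthogonality, together with the filtration of the identity furnished by the triangles in \cref{main2} for generation. The bookkeeping obstacle you isolate (the first-family cones hit every residue class of $\triangleright_{j,\mc{X}}$, including the one replaced by $L\theta_{\mc{X}}^{l*}$) is a genuine subtlety the paper elides; your proposed fix via the second family of triangles together with \cref{eq15} is exactly the intended mechanism, and indeed running the second family alone from $\overline{\tau_{l-1,l-1,\mc{X}}}$ down to $\overline{\tau_{0,l-1,\mc{X}}}$ already produces only the indices $0,\dots,l-2$, giving the $i=l-1$ decomposition cleanly.
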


\bibliography{resolution.bib}
\bibliographystyle{plain}
\vspace{5mm}
Kavli Institute for the Physics and 
Mathematics of the Universe (WPI), University of Tokyo,
5-1-5 Kashiwanoha, Kashiwa, 277-8583, Japan.

\textit{E-mail address}: yu.zhao@ipmu.jp
\end{document}